\theoremstyle{plain}
\newtheorem{theorem}{Theorem}
\newtheorem{corollary}[theorem]{Corollary}
\newtheorem{lemma}[theorem]{Lemma}
\newtheorem{proposition}[theorem]{Proposition}
\theoremstyle{definition}
\newtheorem{notation}[theorem]{Notation}
\newtheorem{remark}[theorem]{Remark}
\newtheorem{definition}[theorem]{Definition}
\newtheorem{claim}[theorem]{Claim}
\numberwithin{theorem}{section}
\numberwithin{figure}{section}
\title{The (2,4,5) Triangle Coxeter Group is not Systolic}
\author{ADAM WILKS}
\begin{document}
\maketitle

\begin{abstract}
  We show that the (2,4,5) triangle Coxeter group is not systolic.
\end{abstract}
\tableofcontents 

\section{Introduction}

We begin with the definition of a systolic complex, introduced and studied independently by Haglund \cite{Hag03} and Januszkiewicz--\'{S}wi\c{a}tkowski \cite{JS06}. The 1--skeleta of systolic complexes were studied even earlier by Chepoi and other under the name of \textit{bridged graphs} (see \cite{BC08}). A simplicial complex $X$ is \textit{flag} if any clique in $X$ spans a simplex of $X$. A subcomplex $K$ of $X$ is \textit{full} if any simplex of $X$ spanned by vertices in $K$, also is contained in $K$. A flag simplicial complex $X$ is \textit{k-large}, if there are no embedded cycles of length less than $k$ and greater than $3$ which are full subcomplexes of $X$. A simplicial complex $X$ is \textit{systolic} if it is connected, simply connected, flag, and 6-large. A group is \textit{systolic} if it acts geometrically, ie properly and cocompactly, by simplicial automorphisms on some systolic complex $X$.

Our paper  produces one of the few examples of a hyperbolic group that is not systolic. Previously, the only known obstruction for hyperbolic groups to be systolic was given by Januszkiewicz and \'{S}wi\c{a}tkowski (see \cite{JS07}). In their paper they introduced \textit{filling invariants} that excluded some groups from being systolic. We cannot use their techniques, instead the main tools we use are the systolic fixed point theorem, Chepoi--Osajda \cite{CO12} , and the systolic Projection Lemma, first shown by Januszkiewicz and \'{S}wi\c{a}tkowski in \cite{JS06}. 

Our example comes from the class of triangle Coxeter groups, which are groups of the form
\[
	W = \langle r,s ,t | r^2=s^2=t^2=(rs)^k=(st)^l=(tr)^m = 1 \rangle
\] where the exponents $k,l,m \geq 2$. Previously Przytycki and Schwer in \cite{PS13} showed that all triangle Coxeter groups were systolic, except for those with exponents $(2,4,4), (2,4,5), $ and $(2,5,5)$. The group with exponents $(2,4,4)$ was shown to not be systolic using the flat torus theorem following the proof of Theorem 4.1 in \cite{EP13}, but whether the groups with exponents $(2,4,5)$ and $(2,5,5)$ are systolic was left open. Our paper deals, in particular, with the triangle Coxeter group with exponents $(2,4,5)$, but we believe our techniques can be adapted to handle the remaining group as well. For the rest of the paper we refer to the triangle Coxeter group with exponents $(2,4,5)$ by $W$ and we will fix the presentation as
\[
	W = \langle r, s , t | r^2 =s^2=t^2=(rs)^4=(st)^5=(tr)^2 \rangle.
\] Our main result is the following theorem.
\begin{theorem}
	W is not systolic.
	\label{thm:main}
\end{theorem}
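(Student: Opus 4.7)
The plan is to argue by contradiction: assume $W$ acts geometrically by simplicial automorphisms on a systolic complex $X$, and extract a combinatorial inconsistency inside $X$ --- ultimately a full cycle of length $4$ or $5$ somewhere in $X$ or in a vertex link of $X$, violating $6$-largeness. The two tools highlighted in the introduction play complementary roles: the Chepoi--Osajda systolic fixed point theorem supplies fixed simplices for each finite subgroup of $W$, and the Januszkiewicz--\'{S}wi\c{a}tkowski Projection Lemma controls how these fixed sets sit relative to one another.

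The first step is to catalogue the finite subgroup lattice of $W$. The maximal finite subgroups are the three ``vertex'' subgroups $\langle r,s\rangle\cong D_4$, $\langle s,t\rangle\cong D_5$, and $\langle r,t\rangle\cong \mathbb{Z}/2\times\mathbb{Z}/2$, meeting pairwise in the $\mathbb{Z}/2$'s generated by the single reflections. By Chepoi--Osajda, each finite subgroup $H\leq W$ has a nonempty invariant simplex in $X$, and hence a nonempty fixed subcomplex $F(H)\subseteq X$. Fixed sets of simplicial automorphisms of a systolic complex are themselves convex (and hence systolic) subcomplexes, and $F(H\cdot H') = F(H)\cap F(H')$, so one obtains a commuting diagram of convex nonempty fixed sets mirroring the subgroup inclusions.

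The second step is to exploit the order-$5$ rotation $st$. Because $D_5$ has elements only of orders $1,2,5$, an invariant simplex on which the generator of order $5$ acts nontrivially would contain a $D_5$-orbit of $5$ vertices spanning a simplex --- a configuration one can rule out by a dimension count coming from cocompactness, forcing the existence of a pointwise fixed vertex $v_{st}$. The $D_5$-action then passes to the link of $v_{st}$, a $6$-large flag graph, and a similar argument pins the action down further. Now the Projection Lemma applied to the convex subcomplex $F(\langle s,t\rangle)$ with source a vertex in $F(\langle r\rangle)$ gives a well-defined projected simplex; using the commutation $(rt)^2=1$, this projection is $t$-invariant, and together with the $st$-rotational symmetry of $F(\langle s,t\rangle)$ it is forced into a small combinatorial configuration near $v_{st}$, which one identifies explicitly.

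The final step, which I expect to be the main obstacle, is to extract the forbidden full cycle. My expectation is that the orbit of $v_{st}$ under the action of $r$ and $rsr$, together with the relation $(rs)^4=1$, produces an embedded loop of length $4$ or $5$ in $X$ whose fullness can be certified by the distance estimates supplied by the Projection Lemma: any would-be chord would connect two vertices whose combinatorial distance in $X$ can be bounded from below by projecting again to a convex fixed set. The delicate point is distinguishing a genuinely full short cycle from a merely closed edge path; ruling out chords will likely require iterating the projection argument and leveraging the $5$-largeness of vertex links along the cycle. Once such a cycle is produced, it directly contradicts $6$-largeness of $X$ and Theorem~\ref{thm:main} follows.
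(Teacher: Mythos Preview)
Your final step aims at the wrong contradiction. You want to exhibit a full $4$- or $5$-cycle in $X$, but $X$ is assumed systolic, so no such cycle exists; the group action cannot manufacture one. The paper runs $6$-largeness in the opposite direction: since every $4$- or $5$-cycle in $X$ \emph{must} have a diagonal, one builds short cycles whose sides are already known to lie in $X$ and whose diagonals all lie in a single $W$-orbit (``edge type''); $6$-largeness then forces that whole edge type into $X$. Chaining such implications (Definition~\ref{def:impl}, Lemma~\ref{lem:idt}, Theorems~\ref{thm:D8}/\ref{thm:D10}) produces edge types between vertices arbitrarily far apart in the Coxeter complex, hence infinitely many $W$-orbits of edges in $X$, contradicting cocompactness. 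Your ``rule out chords by distance estimates from the Projection Lemma'' plan is exactly where this breaks: the chords are always there, and the content is tracking what their presence forces.

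There is also a gap in your setup. You write $F(H\cdot H')=F(H)\cap F(H')$ and treat the fixed-point sets as a compatible system of nonempty convex subcomplexes, but $F(\langle r,s\rangle)\cap F(\langle s,t\rangle)=F(W)$ is empty since $W$ is infinite and acts properly. The paper works not with pointwise-fixed sets but with \emph{invariant simplices} (cliques), and the substantive intermediate result is Theorem~\ref{Thm:fix}: one can choose invariant cliques $A,B,C$ for $\langle r,s\rangle$, $\langle s,t\rangle$, $\langle t,r\rangle$ that share a common vertex. This is not automatic from convexity; it is proved by an induction on the distance between such cliques (Propositions~\ref{prop:base} and \ref{prop:ind}), with the Projection Lemma used to build an $s$-symmetric geodesic in the induction step and Gauss--Bonnet on minimal filling diagrams to control boundary curvatures. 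Your dimension-count argument for a $D_5$-fixed vertex is also not available: systolic complexes acted on cocompactly can have simplices of any fixed dimension, so a $5$-vertex invariant simplex is not excluded a priori.
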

 A crucial intermediary step that is needed in the proof of Theorem~\ref{thm:main} is a strengthened version of the fixed point theorem for $W$.
\begin{theorem}
\label{Thm:fix}
	If $W$ acts  geometrically by simplicial automorphisms on a systolic complex $X$, then there exist simplices $A,B,C$ of $X$ such that $A$ is fixed by $\langle r, s \rangle$, $B$ is fixed by $\langle s, t \rangle$, $C$ is fixed by $\langle t, r \rangle$, and $A \cap B \cap C \neq \emptyset$. 
\end{theorem}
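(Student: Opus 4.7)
The plan is to combine the two tools named in the introduction: the Chepoi--Osajda systolic fixed point theorem and the systolic projection lemma of Januszkiewicz--\'{S}wi\c{a}tkowski. Each of $\langle r, s\rangle$, $\langle s, t\rangle$, $\langle r, t\rangle$ is a finite subgroup of $W$ (dihedral of orders $8$, $10$, and $4$ respectively), so Chepoi--Osajda immediately produces simplices $A_0$, $B_0$, $C_0$ fixed setwise by the three subgroups. The real content of the theorem is the existence of a triple with non-empty common intersection, since a priori the initial triple may lie at arbitrary pairwise distance in $X$.

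I would first address the pairwise version. The pairwise intersections of the three dihedral subgroups inside $W$ are $\langle s\rangle$, $\langle t\rangle$, and $\langle r\rangle$. For instance, given $A_0$ and $B_0$, I would apply the systolic projection lemma to project $B_0$ onto $A_0$ and vice versa; the resulting subsimplices are $\langle s\rangle$-invariant by equivariance of projection under common stabilizers. Taking the $\langle r, s\rangle$-orbit of the projection inside $A_0$ (which still spans a simplex of $A_0$ by flagness) produces a new $\langle r, s\rangle$-fixed simplex that is combinatorially closer to $B_0$. Iterating alternately, the distance strictly decreases, and being a non-negative integer it must terminate, yielding simplices $A_1, B_1$ fixed by $\langle r, s\rangle$, $\langle s, t\rangle$ respectively with $A_1 \cap B_1 \neq \emptyset$. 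A parallel argument handles the other two pairs.

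The main obstacle is to promote the pairwise intersections into a triple intersection at a common vertex. I would attempt this by induction on the combined displacement $d(A, B) + d(B, C) + d(C, A)$: if this sum is positive, applying the projection argument to the pair realizing the maximum pairwise distance should strictly decrease the sum, so any minimizer must have all three pairwise distances equal to zero. Passing from mutual pairwise intersection to a \emph{common} vertex is the most delicate step, and I expect it to require a Helly-type property for intersections of fixed-point simplices in systolic complexes, with essential use of the flag and $6$-large conditions to rule out the ``triangle'' configuration in which the three simplices meet pairwise but share no common vertex.
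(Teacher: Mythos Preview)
Your overall architecture is on the right track, but there is one genuine gap and one unnecessary complication.

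\textbf{The pairwise step is the hard part, and your projection argument does not close it.} When you project a $\langle s,t\rangle$-invariant simplex $B_0$ one step toward $A_0$ via the Projection Lemma, the image is only $\langle s\rangle$-invariant (that being the common stabiliser). To recover a $\langle s,t\rangle$-fixed simplex you propose taking the $\langle s,t\rangle$-orbit, but there is no reason this orbit spans a simplex: flagness only helps once you already know the orbit is a clique, and nothing forces that. (Your phrase ``inside $A_0$'' suggests a confusion: the projection lands one step closer to $A_0$, not inside it.) The paper confronts exactly this obstruction. It uses the Projection Lemma only to build a geodesic $P$ from $A$ to $B$ along which consecutive vertices together with their $s$-images span cliques; it then concatenates $P$, $tP$, $rP$, $trP$ into a single cycle, takes a minimal filling diagram, and runs a Gauss--Bonnet/boundary-curvature analysis (with extensive case work on $8$- and $10$-cycles under dihedral symmetry) to force either a shorter pair of fixed cliques or a diagonal that collapses the relevant orbit to a clique. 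This is the substantive content of Propositions~\ref{prop:base} and~\ref{prop:ind}, and it is not a routine consequence of projection.

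\textbf{The triple/Helly step is unnecessary.} You do not need to control all three pairwise distances or invoke any Helly-type property. The paper reduces only the single distance between an $\langle r,s\rangle$-fixed clique and an $\langle s,t\rangle$-fixed clique. Once these intersect in a vertex $v$, set $C=\langle r,t\rangle v$. Since $rv$ lies in the $\langle r,s\rangle$-fixed clique and $tv$ in the $\langle s,t\rangle$-fixed clique, both $(v,rv)$ and $(v,tv)$ are edges; as $\langle r,t\rangle$ has order $4$, the orbit $C$ is a cycle of length at most $4$, and $6$-largeness (plus the fact that both diagonals lie in a single edge type) forces $C$ to be a clique containing $v$. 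So the ``triangle configuration'' you worry about never arises.
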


\subsection{Organisation}

Preliminary definitions, notation, and lemmas are given in Section~\ref{sec:prelim}. The outline of the proof of Theorem~\ref{Thm:fix} is given in Section~\ref{sec:fix} with the details being filled in by Sections~\ref{sec:base}~and~\ref{sec:ind}. The proof of Theorem~\ref{thm:main} is given in Section~\ref{sec:proofMain}, postponing the proofs of some of the intermediary results to Sections~\ref{sec:connectingCliques}--\ref{sec:(5,4)}.

\section{Systolic Spaces and Edge Types}

\label{sec:prelim}
\begin{notation}
	From now on all actions are implied to be by simplicial automorphisms.
\end{notation}

\begin{definition}
   \label{Def1}
	Suppose $G$ is a group, $X$ is a systolic space, and that $G$ acts on $X$. An $\textit{edge}$ in $X$ is a pair of 0--cells (not necessarily adjacent). The $\textit{edge type}$ of the edge $(u,v)$ is the $G-$orbit $G(u,v)$. We say that the edge $(u,v)$ is $\textit{contained}$ in $X$ if $u$ and $v$ are adjacent in $X$. We say that the edge type of $(u,v)$ is \textit{contained} in $X$ if every edge in the edge type of $(u,v)$ is contained in $X$.
\end{definition}

    In Defintion \ref{Def1} since the action of $G$ is simplicial if some edge $(u,v)$ is contained in $X$ then all the edges in the edge type of $(u,v)$ are contained in $X$. For this reason when we say $(u,v)$ is contained in $X$, the reader should take this to mean that both the edge $(u,v)$ is contained in $X$ and that the edge type of $(u,v)$ is contained in $X$.

    For convenience we say that $(v,v)$ is contained in $X$ for all $v \in X^0$.

\begin{definition}
	An \textit{n-cycle} is a sequence of vertices in $X$ of length $n$. We say that $X$ \textit{contains} the $n$-cycle, $v_1, \dots , v_n$ if $X$ contains the edge types $(v_i, v_{i+1})$ for $i < n$ and the edge type $(v_n,v_1)$. For an $n$-cycle $\gamma$ we say that a \textit{side} of $\gamma$ is an edge between consecutive vertices in $\gamma$ and a \textit{diagonal} of $\gamma$ is an edge between nonconsecutive vertices in $\gamma$. An \textit{n-cycle type} is the $G$--orbit of an $n$-cycle.
\end{definition}

\begin{remark}
	\label{rm:sys -> diag}
	Suppose $X$ is a systolic complex. Since $X$ is a $6$-large complex, if $X$ contains a 4-cycle  or a 5-cycle $C$, then $X$ contains one of the diagonals of $C$. This remarks is still true if the cycle is not embedded in $X$. For instance if two consecutive vertices in a $4$ or $5$ cycle $\gamma$ are equal then some of the sides of $\gamma$ are also  diagonals, so the remark is trivially true.  
\end{remark}

\begin{definition}
    Let $S$ be a triangulated surface with boundary. For $v \in S^0$ let $\angle(v)$ denote the number of 2-cells containing $v$. If $v$ is in the interior of $S$ we say $v$ has \textit{curvature} $\kappa(v) = 6 - \angle(v)$. If $v$ is on the boundary of $S$ then we say that $v$ has \textit{boundary curvature} $\kappa_{\partial}(v) = 3 - \angle(v)$.
\end{definition}

\begin{lemma} {(Gauss--Bonnet Theorem)}
\label{lem:gauss}
	If $S$ is a compact triangulated surface, then

	\[
        6 \chi (S)  = \sum_{v \in \partial S} \kappa_{\partial}(v) + \sum_{v \in IntS} \kappa(v)
    \]
\end{lemma}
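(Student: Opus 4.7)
The plan is to reduce the identity to standard combinatorial relations among vertices, edges, and faces, using only double-counting of incidences between vertices and 2-cells.

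Let $V$, $E$, $F$ denote the number of vertices, edges, and 2-cells of $S$, so that $\chi(S)=V-E+F$. Split vertices into $V_i$ interior and $V_\partial$ boundary, and edges into $E_i$ interior and $E_\partial$ boundary. The first step is to record two standard identities for a triangulated surface with boundary: since each 2-cell has three edges, each interior edge lies in two 2-cells, and each boundary edge lies in exactly one 2-cell, we get
\[
3F = 2E_i + E_\partial.
\]
Since the boundary $\partial S$ is a closed $1$-manifold (a disjoint union of circles), it is triangulated as a union of cycles, so
\[
V_\partial = E_\partial.
\]

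Next I would double-count vertex-face incidences. Every 2-cell contributes $3$ to the total $\sum_v \angle(v)$, one for each of its vertices, so
\[
\sum_{v\in \mathrm{Int}\,S}\angle(v)+\sum_{v\in\partial S}\angle(v)=3F.
\]
Plugging the definitions of $\kappa$ and $\kappa_\partial$ into the right-hand side of the Gauss--Bonnet identity gives
\[
\sum_{v\in \partial S}\kappa_\partial(v)+\sum_{v\in \mathrm{Int}\,S}\kappa(v) = 3V_\partial + 6V_i - 3F.
\]

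The final step is a straightforward manipulation using the two identities above. Writing $V_i = V - V_\partial$ and substituting $V_\partial=E_\partial$ and $3F=2E_i+E_\partial$, the right-hand side becomes $6V - 3E_\partial - 3F$, which equals $6V-6E+6F$ after using $3E_\partial = 2E - 3F$ (itself a direct consequence of the two relations). Thus the expression equals $6\chi(S)$. There is no genuine obstacle here; the only care needed is the bookkeeping at the boundary, where an edge on $\partial S$ borders only one 2-cell rather than two, which is exactly what makes the $3V_\partial$ term appear with the correct coefficient.
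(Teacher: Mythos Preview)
Your argument is correct; the paper actually states this lemma without proof, citing it simply as the Gauss--Bonnet Theorem. One small slip: in the last paragraph you write ``$3E_\partial = 2E - 3F$'' but the identity you need (and which does follow from $3F = 2E_i + E_\partial$ and $E = E_i + E_\partial$) is $E_\partial = 2E - 3F$; substituting that into $6V - 3E_\partial - 3F$ indeed gives $6V - 6E + 6F = 6\chi(S)$.
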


As mentioned by the authors in the introduction for \cite{EP13} a direct consequence of Theorem C of their paper is the systolic fixed point theorem, stated now.

\begin{theorem} [systolic fixed point theorem]
\label{thm:fix}
	Let $G$ be a finite group acting on a systolic complex $X$. Then there exists a simplex $\sigma \in X$ which is invariant under the action of $G$.
\end{theorem}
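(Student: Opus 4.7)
The plan is to prove this as a combinatorial analogue of the Bruhat--Tits fixed point theorem. Since $G$ is finite, the orbit $Gv$ of any vertex $v \in X^{0}$ is a finite set. Let $r$ be the smallest integer for which there exists $w \in X^{0}$ with $Gv \subseteq B(w,r)$, where $B(w,r)$ denotes the closed combinatorial ball of radius $r$ in the $1$-skeleton, and let
\[
  C = \{\, w \in X^{0} : Gv \subseteq B(w,r)\,\}.
\]
Because $g \cdot B(w,r) = B(gw, r)$ for every $g \in G$ and because $Gv$ is $G$-invariant, the set $C$ is itself $G$-invariant.

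The crux of the argument is to show that $C$ spans a simplex of $X$. Combinatorial balls in a systolic complex are convex, so the family of radius-$r$ balls containing $Gv$ has strong geometric structure. Using $6$-largeness together with the convexity of balls, one shows that any two centers $w_1, w_2 \in C$ must be adjacent in $X$: were they not adjacent, then a shortest path between them, combined with the systolic projection technique of Januszkiewicz--\'{S}wi\c{a}tkowski, would produce a ball of radius strictly less than $r$ still containing $Gv$, contradicting the minimality of $r$. This is in essence the technical content of Theorem~C of \cite{EP13}.

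Once pairwise adjacency is established, flagness of $X$ promotes the clique $C$ to a simplex $\sigma$ of $X$, and since $C$ is $G$-invariant, so is $\sigma$, which is the required fixed simplex. The main obstacle in carrying this out is the pairwise adjacency step: it requires a delicate local analysis around a vertex of $Gv$ realizing the maximal distance $r$ in order to extract, from two non-adjacent minimizers, a new candidate center with a strictly smaller enclosing radius. Given this input, the rest of the proof is formal.
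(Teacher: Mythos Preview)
The paper does not actually prove this theorem. It is quoted as a known result: the sentence preceding the statement says that the systolic fixed point theorem is ``a direct consequence of Theorem~C'' of \cite{EP13} (the result is also attributed to Chepoi--Osajda \cite{CO12} in the introduction). So there is no proof in the paper to compare against; the theorem is treated as a black box.

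Your outline is the right shape for how the actual proof in the literature goes: one minimizes the radius of a ball enclosing a finite $G$-orbit and argues that the set $C$ of optimal centers is a $G$-invariant clique. Where your sketch stops being a proof is exactly the step you flag yourself: showing that any two elements of $C$ are adjacent. You write that this ``is in essence the technical content of Theorem~C of \cite{EP13},'' which is true, but that is precisely the citation the paper already makes for the whole theorem. In other words, your proposal does not add an independent argument beyond what the paper cites; it reduces the statement to the same external input. The informal reason you give (two non-adjacent centers would allow one to project and lower the radius) is the right intuition, but turning it into a proof requires genuine work with convexity of balls and the projection lemma that you have not carried out here. As written, this is a correct summary of the strategy, not a self-contained proof.
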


We follow Definition 2.6 and Theorem 2.7 from \cite{P14} in giving a formulation of a filling diagram and their existence in systolic complexes.

\begin{definition}
	A simplicial complex $X$ is \textit{locally k-large} if the link of every vertex in $X$ is $k$-large. 
\end{definition}

\begin{lemma}
\label{lem:glob->loc}
	Suppose a simplicial complex $X$ is $k$-large. Then $X$ is locally $k$-large. In particular a systolic complex is locally $6$-large.
\end{lemma}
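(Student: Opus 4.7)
The plan is to fix an arbitrary vertex $v \in X$ and show that $\mathrm{lk}_X(v)$ is both flag and contains no full embedded cycle of length strictly between $3$ and $k$. Throughout, I would use the key observation that because $X$ is flag, two vertices $w, w' \in \mathrm{lk}_X(v)$ are adjacent in $\mathrm{lk}_X(v)$ if and only if they are adjacent in $X$: one direction is definitional, and the other follows because the three edges $\{v,w\}, \{v,w'\}, \{w,w'\}$ form a clique in $X$, which spans a $2$-simplex by flagness.

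First I would verify flagness of $\mathrm{lk}_X(v)$. Given a clique $\{w_1,\dots,w_n\}$ in $\mathrm{lk}_X(v)$, pairwise adjacency in the link gives pairwise adjacency in $X$, and each $w_i$ is adjacent to $v$ by definition of the link. Thus $\{v,w_1,\dots,w_n\}$ is a clique in $X$; by flagness it spans a simplex, which means $\{w_1,\dots,w_n\}$ spans a simplex in $\mathrm{lk}_X(v)$.

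Next I would argue the cycle condition by contradiction: suppose $\gamma = (w_1,\dots,w_\ell)$ is an embedded full cycle in $\mathrm{lk}_X(v)$ with $4 \le \ell < k$. I would show $\gamma$ is also an embedded full cycle in $X$ of the same length, contradicting $k$-largeness of $X$. Embedding and the cycle structure transfer immediately since edges of $\mathrm{lk}_X(v)$ are edges of $X$ and the vertices are distinct. For fullness in $X$, if $w_i, w_j$ were adjacent in $X$ with $|i-j| \ne 1 \bmod \ell$, then by the observation above they would also be adjacent in $\mathrm{lk}_X(v)$, violating fullness of $\gamma$ there.

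The ``in particular'' claim follows at once, since a systolic complex is flag and $6$-large by definition. No step is particularly delicate here; the only thing to be careful about is the biconditional adjacency statement in the first paragraph, which is exactly where the flagness hypothesis on $X$ gets used, and without which the argument would fail.
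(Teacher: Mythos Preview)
Your argument is correct and is the standard way to prove this fact. The paper itself states Lemma~\ref{lem:glob->loc} without proof, treating it as a known preliminary result, so there is no approach to compare against; your proof would serve perfectly well as the omitted verification.
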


\begin{definition}
	Let $\gamma$ be a a cycle in a simplicial complex $X$. A \textit{filling diagram} for $\gamma$ is a simplicial map $f: D \rightarrow X$, where $D$ is a triangulated 2-disc, and $f|_{\partial D}$ maps isomorphically onto $\gamma$. Fix a filling diagram $f$. We say $f$ is \textit{minimal} if $D$ consists of the least possible number of 2-simplices among filling diagrams for $\gamma$. We say $f$ is \textit{locally k-large} if $D$ is locally $k$-large as simplicial complex. We say $f$ is \textit{nondegenerate} if $f$ is injective on each simplex.
\end{definition}

\begin{theorem}
\label{thm:exist}
	Let $\gamma$ be a cycle in a systolic complex $X$. Then we have the following:
	\begin{enumerate}
		\item there exists a filling diagram for $\gamma$,
		\item any minimal filling diagram for $\gamma$ is simplicial, locally $6$-large and nondegenerate 
	\end{enumerate}
\end{theorem}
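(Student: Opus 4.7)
The plan is to prove (1) by invoking simple connectedness of $X$, and (2) by a minimality argument that surgers $D$ whenever it fails to be nondegenerate or locally $6$-large.

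For (1), since $X$ is simply connected the cycle $\gamma$ is null-homotopic, so a continuous null-homotopy composed with simplicial approximation on a sufficiently fine subdivision of $D^2$ produces a simplicial map $f: D \to X$ restricting to $\gamma$ on $\partial D$. Alternatively, one argues combinatorially: $\gamma$ represents the trivial element of $\pi_1(X)$, and the sequence of elementary edge-path moves (each reflecting across a $2$-simplex of $X$) witnessing this triviality assembles, in the standard way, into a van Kampen diagram that serves as $D$.

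For (2), fix a minimal filling diagram $f: D \to X$ and assume toward contradiction that $f$ is either degenerate or that $D$ fails to be locally $6$-large. If $f$ identifies two vertices $u,v$ that span a simplex of $D$, then collapsing this simplex in $D$ yields a triangulated disc $D'$ with strictly fewer $2$-simplices and an induced simplicial map $f': D' \to X$ with the same boundary, contradicting minimality; this simultaneously handles nondegeneracy and confirms simpliciality in the sense of the statement. For local $6$-largeness, boundary vertices of $D$ have path links and are automatically $6$-large, so suppose some interior vertex $v \in D$ has link a $k$-cycle for $k \in \{4,5\}$; the star of $v$ in $D$ then consists of $k$ triangles $v w_i w_{i+1}$. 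The image cycle $f(w_1), \dots, f(w_k)$ lies in the link of $f(v)$ in $X$, which is $6$-large by Lemma~\ref{lem:glob->loc}; Remark~\ref{rm:sys -> diag} then provides a diagonal of this image cycle already present in $X$. Use this diagonal to retriangulate the star of $v$ with strictly fewer than $k$ triangles, obtaining a new filling diagram of smaller area and again contradicting minimality.

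The hard part will be the surgery step: one must verify that after the local modification the resulting complex is still a triangulated disc and that $f$ extends simplicially across the new triangulation. This requires separate bookkeeping for $k=4$ and $k=5$ (two and three replacement triangles respectively), and care in the event that several of the $f(w_i)$ coincide or that multiple diagonals are simultaneously available. The nondegeneracy collapse demands analogous attention to ensure that $D'$ remains a disc and that $f'$ descends as a simplicial map. In every case, however, the local move strictly decreases the number of $2$-simplices, and this is what drives the contradiction with minimality.
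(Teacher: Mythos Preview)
The paper does not prove this theorem; it is quoted from \cite{P14} (following their Definition~2.6 and Theorem~2.7), so there is no in-paper proof to compare against. Your sketch is essentially the standard argument that appears in \cite{P14} and, before that, in \cite{JS06}: existence via simple connectedness, and the three properties in (2) via area-reducing surgeries that contradict minimality.

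One point to tighten in your local $6$-largeness step: for $k=5$, a single diagonal of the image cycle in $\mathrm{lk}(f(v))$ is not enough to retriangulate the pentagon $w_1\cdots w_5$ with three triangles. You need two diagonals. The fix is immediate: once you have, say, $f(w_1)f(w_3)$ in the link, apply $6$-largeness of $\mathrm{lk}(f(v))$ again to the $4$-cycle $f(w_1)f(w_3)f(w_4)f(w_5)$ to obtain a second diagonal, and then the pentagon admits a fan triangulation mapping simplicially to $X$ by flagness. You gesture at this with ``three replacement triangles'' but your text literally says ``use this diagonal,'' so make the iteration explicit. The nondegeneracy collapse and the $k=4$ case are fine as written.
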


\begin{remark}
\label{rem:curvInt}
	Suppose $f:D \rightarrow X$ is a minimal filling diagram for a cycle in a systolic complex $X$. Since $D$ is locally $6$-large for any $v$ in the interior of $D$ we have that $\angle(v) \geq 6$ and consequently $\kappa(v) \leq 0$. 
\end{remark}

We state a projection lemma for systolic space given by
Lemma 7.9 (1) in \cite{JS06}.

\begin{definition}
	Suppose $X$ is a simplicial complex, $\sigma$ is a simplex in $X$, and $Y$ is a subcomplex of $X$. The \textit{residue} of $\sigma$ in $X$, Res$(\sigma,X)$ is the union of all the simplices of $X$ that contain $\sigma$. 
\end{definition}
	
\begin{definition}
	A subcomplex $Q$ in a simplicial complex $X$ is \textit{convex} if it is full and given any geodesic path $\gamma$ in the $1$-skeleton of $X$, with both endpoints in $Q$, the path $\gamma$ is contained in $Q$. Obviously a $1$--cell in $X$ is convex.
\end{definition}

\begin{lemma}[Projection Lemma]
\label{lem:proj}
	Let $X$ be a systolic simplicial complex, $Q$ a convex subcomplex in $X$, and $n \geq 1$ a natural number. Then for any simplex $\rho$ of the sphere $S_n(Q,X)$ the intersection $B_{n-1}(Q,x) \cap \text{Res}(\rho,X)$ is a single, nonempty simplex of $X$.
\end{lemma}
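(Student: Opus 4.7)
\textit{Plan.} The proof would rest on the fact, established in \cite{JS06} by induction on $k$ using minimal filling diagrams and the curvature bookkeeping of Lemma~\ref{lem:gauss}, that whenever $Q$ is a convex subcomplex of a systolic complex $X$ the ball $B_k(Q,X)$ is itself convex. I would take this ball-convexity as the engine of the proof and split the lemma into two parts: (i) any two vertices of the intersection $B_{n-1}(Q,X) \cap \text{Res}(\rho,X)$ are adjacent in $X$, so the intersection is a single simplex; (ii) the intersection is nonempty.

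For part (i), let $u_1, u_2$ be two vertices of $B_{n-1}(Q,X) \cap \text{Res}(\rho,X)$. Each is adjacent to every vertex of $\rho$, so for any $v \in \rho$ the concatenation $u_1, v, u_2$ is a path of length $2$. If $u_1, u_2$ were distinct and non-adjacent in $X$, this path would be a geodesic with endpoints in the convex subcomplex $B_{n-1}(Q,X)$; by convexity every vertex on it would have to lie in $B_{n-1}$, forcing $v \in B_{n-1}$ and contradicting $v \in S_n(Q,X)$. Hence the vertex set of the intersection is a clique. Since $X$ is flag and both $B_{n-1}(Q,X)$ and $\text{Res}(\rho,X)$ are full subcomplexes, this clique spans a single simplex that exhausts the intersection.

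For part (ii), I would induct on $\dim \rho$. When $\dim \rho = 0$, $\rho = \{v\}$ with $d(v,Q)=n$, and by definition $v$ has a neighbor in $B_{n-1}(Q,X)$, which lies in $B_{n-1} \cap \text{Res}(v,X)$. For the inductive step, write $\rho = \rho' \cup \{v\}$ with $\rho'$ a codimension-one face, and apply the inductive hypothesis (together with part (i)) to obtain simplices $\sigma' \subset B_{n-1} \cap \text{Res}(\rho', X)$ and $\sigma_v \subset B_{n-1} \cap \text{Res}(v, X)$. Picking $u \in \sigma'$, $u' \in \sigma_v$, and some $w \in \rho'$, I would study the $4$-vertex path $u, w, v, u'$: the endpoints $u, u'$ lie in the convex ball $B_{n-1}$ while the interior vertices $w, v$ lie in $S_n$. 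A case analysis driven by $6$-largeness, applied to $4$-cycles obtained by closing this path up via a geodesic from $u$ to $u'$ inside $B_{n-1}$, together with fullness, should produce a vertex in $B_{n-1}$ adjacent to every vertex of $\rho$.

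The main obstacle is the ball-convexity result underlying the argument; its proof is an induction on $k$ using disc diagrams and Gauss--Bonnet, and effectively absorbs all the nonpositive-curvature content of systolic geometry. Within the projection lemma itself, the delicate step is the combinatorial case analysis for the $4$-vertex path in the inductive step of part (ii); the remaining steps follow readily from flagness and convexity once ball-convexity is in hand.
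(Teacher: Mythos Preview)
The paper does not prove Lemma~\ref{lem:proj}; it merely quotes it as Lemma~7.9(1) of \cite{JS06}. So there is no ``paper's own proof'' to compare against. Your outline is in fact very close to the argument in \cite{JS06}: ball--convexity of $B_{n-1}(Q,X)$ (their Lemma~7.7) is exactly the engine, and part~(i) of your plan is the standard proof that the intersection, if nonempty, is a single simplex. Your justification that $\text{Res}(\rho,X)$ is full (needed to pass from ``pairwise adjacent vertices'' to ``single simplex contained in the intersection'') follows from flagness, as you say.

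The only place your write--up is genuinely incomplete is the inductive step of part~(ii), which you yourself flag as delicate. The four--vertex path $u,w,v,u'$ does work, but one must organise the case analysis around the single quantity $d(u,u')$, which is independent of the choice of $w\in\rho'$. Convexity of $B_{n-1}$ forces $d(u,u')\le 2$. If $d(u,u')\le 1$ then for \emph{every} $w\in\rho'$ the $4$--cycle $u',v,w,u$ must have the diagonal $(u',w)$ (the other diagonal $(u,v)$ being excluded), so $u'$ is adjacent to all of $\rho$. If $d(u,u')=2$, fix a midpoint $z\in B_{n-1}$; either $z$ is adjacent to $v$, in which case replacing $u'$ by $z$ reduces to the $d=1$ case, or for each $w$ the $5$--cycle $u',v,w,u,z$ together with the $4$--cycle $z,w,v,u'$ forces $(u',w)$ again. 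This is the missing computation; once you insert it, your proof is complete and matches \cite{JS06}.
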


% \begin{lemma}
%         Let $G = \langles,t,r | (st)^5= (tr)^2 = (rs)^4 = 1>$ act geometricaly on some systolic complex $X$.There exist cliques $A$ and $V$ such that they are fixed by $\langles,t>$ and $\langler,s>$ respectively and $A \cap B \neq \emptyset$.
% \end{lemma}

% \begin{proof}
%         For $x,y \in X^0$ let $d(x,y)$ be the distance between $x$ and $y$ in the 1-skeleton of $X$. Let $v$ and $w$ be 0-cells in $A$ and $B$ respectively minimizing $d(v,w)$. We proceed by induction on $d(v,w)$. 

%         {\emph Base Case: d(v,w) = 1}   

%  \end{proof}

\section{Proving Theorem \ref{thm:fix}}

\label{sec:fix}

In this section we give an outline for proving Theorem~\ref{Thm:fix}. The main difficulties of the proof lie in the proof of the following propositions, which we prove in subsequent sections.

\begin{proposition}
\label{prop:base}
    Suppose that $W$ acts geometrically on a systolic complex $X$. If there exist cliques $A$ and $B$ fixed by $\langle s,t\rangle$ and $\langle r,s\rangle$, respectively, such that $d(A,B) =1$, then there exist cliques $A'$ and $B'$ fixed by $\langle s,t \rangle$ and $\langle r,s \rangle$, respectively,  such that $A' \cap B' \neq \emptyset$.
 \end{proposition}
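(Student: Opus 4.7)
The plan hinges on the common involution $s \in \langle s, t \rangle \cap \langle r, s \rangle$, which setwise stabilizes both $A$ and $B$, together with the $4$-cycle diagonal trick from Remark~\ref{rm:sys -> diag}. Since $d(A, B) = 1$, I would pick adjacent vertices $a \in A$ and $b \in B$. Applying $s$ to the edge $(a, b)$ yields the edge $(sa, sb)$ with $sa \in A$ and $sb \in B$, producing a (possibly degenerate) $4$-cycle $a, b, sb, sa$ in $X$. By Remark~\ref{rm:sys -> diag} this cycle has a diagonal; the two candidate diagonals $(a, sb)$ and $(sa, b)$ are interchanged by $s$, so both must be edges of $X$. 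Combined with $a \sim sa$ inside the clique $A$ and $b \sim sb$ inside the clique $B$, this shows that $\{a, sa, b, sb\}$ is a clique, which by flagness spans a simplex $\tau$ in $X$.

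With this bridging simplex in hand, the next step is to promote $\tau$ to the desired dihedral-fixed cliques $A'$ and $B'$. The natural candidates are $A' := A \cup \{b, sb\}$ (unioning $A$ with the $s$-invariant face of $B$ inside $\tau$) and symmetrically $B' := B \cup \{a, sa\}$; any common vertex of these two cliques, which will exist as $\{a, sa\} \subseteq A' \cap B'$ once the construction succeeds, then realizes $A' \cap B' \neq \emptyset$. For $A'$ to actually be a simplex, every vertex of $A$ must be adjacent to both $b$ and $sb$, and for $A'$ to be fixed by $\langle s, t \rangle$ the adjoined set $\{b, sb\}$ must be stable under $t$, forcing $tb, tsb \in \{b, sb\}$. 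Symmetric conditions are needed for $B'$ under $\langle r, s\rangle$.

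The main obstacle is precisely this verification, because $t$ does not preserve $B$ and $r$ does not preserve $A$, so one cannot simply translate the established edge $(a,b)$ by $t$ or $r$ to produce further adjacencies within the same clique. I expect the detailed proof to proceed by a careful case analysis on the orbit structures of $a$ under $\langle s, t\rangle$ and of $b$ under $\langle r, s\rangle$, combined with curvature estimates obtained by applying the Gauss--Bonnet Theorem (Lemma~\ref{lem:gauss}) to minimal filling diagrams of the longer cycles generated by the dihedral relations $(rs)^4 = 1$ and $(st)^5 = 1$. Configurations in which $A \cup \{b, sb\}$ fails to be a clique, or fails to be closed under $t$, should be ruled out as forcing full cycles of length $4$ or $5$ incompatible with $6$-largeness (via Remarks~\ref{rm:sys -> diag} and~\ref{rem:curvInt}). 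The bulk of the technical work presumably consists of enumerating and eliminating these bad configurations, which is what Section~\ref{sec:base} is devoted to.
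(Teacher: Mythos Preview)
Your opening observation about the $s$-invariant $4$-cycle $a,b,sb,sa$ is fine, and indeed $\{a,sa,b,sb\}$ spans a simplex. But the subsequent plan has a genuine gap: setting $A'=A\cup\{b,sb\}$ cannot work. For $A'$ to be $\langle s,t\rangle$-invariant you would need $tb\in\{b,sb\}$, i.e.\ $b$ fixed by $t$ or by $ts$; since $b$ was merely a vertex of a clique $B$ fixed \emph{setwise} by $\langle r,s\rangle$, there is no reason whatsoever for $t$ to fix it. Likewise, there is no reason every vertex of $A$ is adjacent to $b$: you only know \emph{one} vertex $a\in A$ is. So neither the clique condition nor the invariance condition on your candidate $A'$ is accessible, and the vague appeal to ``case analysis on orbit structures'' in the last paragraph does not repair this.

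The paper's route is conceptually different. Rather than enlarging the given $A$ and $B$, it looks for a \emph{single vertex} $v'$ such that the orbits $\langle s,t\rangle v'$ and $\langle r,s\rangle v'$ are themselves cliques; these then serve as $A'$ and $B'$ and intersect automatically at $v'$. To find such a $v'$, the paper exploits not $s$ but the commuting pair $r,t$: starting from adjacent $v\in A$, $w\in B$, it forms the $8$-cycle $C=(v,w,tw,tv,trv,trw,rw,rv)$ and studies its minimal filling diagrams via Lemma~\ref{D8L1}. A chain of implications among edge types (labelled $\mathbf{2},\mathbf{2}',\mathbf{4},\mathbf{4}'$) eventually forces $X$ to contain $(v,tv)$ or $(w,rw)$. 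At that point one of the starting vertices already has two of its three dihedral orbits forming cliques, and dedicated structural lemmas (Lemmas~\ref{lem:tv->cliques} and~\ref{lem:wr->cliques}, themselves proved by further filling-diagram analysis of the $8$- and $10$-cycles $\langle r,s\rangle v$ and $\langle s,t\rangle v$) upgrade this to the desired $v'$. The key idea you are missing is this reduction to ``orbit cliques of a single vertex'' together with the use of the $\langle r,t\rangle$-generated $8$-cycle as the arena for the Gauss--Bonnet argument.
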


 \begin{proposition}
 \label{prop:ind}
     Suppose that $W$ acts geometrically on a systolic complex $X$. If there exist cliques $A$ and $B$ fixed by $\langle s,t\rangle$ and $\langle r,s\rangle$, respectively, such that $d(A,B) =n$, for $n \geq 2$, then there exists cliques $A'$ and $B'$ fixed by $\langle s,t \rangle$ and $\langle r,s \rangle$, respectively,  such that $d(A',B') < n$.
 \end{proposition}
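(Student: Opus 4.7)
The plan is to use the Projection Lemma (Lemma~\ref{lem:proj}) to push one clique one step closer to the other, and then to restore the broken group symmetry using the systolic fixed point theorem (Theorem~\ref{thm:fix}). The fundamental tension in the argument is that $\langle s,t\rangle$ and $\langle r,s\rangle$ share only the order-$2$ element $s$, so a direct projection step is only $\langle s\rangle$-equivariant, not equivariant under the full dihedral stabilizer that we ultimately need.

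First I would apply Lemma~\ref{lem:proj} with $Q=A$, which is convex because $A$ is a simplex in a systolic complex. Let $\rho := B\cap S_n(A,X)$; this face of $B$ is nonempty because $d(A,B)=n$, and it is $\langle s\rangle$-invariant because $s$ lies in both stabilizers and therefore preserves both $B$ and distances to $A$. The lemma then returns a unique simplex $\sigma$ with vertices in $B_{n-1}(A,X)$ such that $\sigma\cup\rho$ spans a clique; by uniqueness of the projection, $\sigma$ is also $\langle s\rangle$-invariant.

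Next I would promote $\sigma$ to a $\langle r,s\rangle$-fixed simplex. For each $g\in\langle r,s\rangle$ the translate $g\sigma$ is joined to $g\rho\subseteq B$, so the entire orbit $\langle r,s\rangle\cdot\sigma$ lies inside the $\langle r,s\rangle$-invariant residue $\mathrm{Res}(B,X)$. Applying Theorem~\ref{thm:fix} to a suitable $\langle r,s\rangle$-invariant convex subcomplex containing this orbit then yields a $\langle r,s\rangle$-fixed simplex $B'$. The desired bound $d(A,B')<n$ should follow because $\sigma$ itself already sits in $B_{n-1}(A,X)$, and the $6$-largeness of $X$ together with the order-$8$ dihedral structure of $\langle r,s\rangle$ should constrain $B'$ to stay near $\sigma$ rather than retreating to $B$.

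The main obstacle will be establishing the strict distance reduction. A priori the $\langle r,s\rangle$-fixed simplex produced by Theorem~\ref{thm:fix} could simply be $B$ itself, which gives no improvement. Ruling this out will demand a delicate combinatorial argument: one builds a short cycle out of $A$, $B$, $\sigma$, and a $\langle r,s\rangle$-translate $g\sigma$; chooses a minimal (hence locally $6$-large and nondegenerate) filling diagram using Theorem~\ref{thm:exist}; and applies the Gauss--Bonnet formula (Lemma~\ref{lem:gauss}) to force a contradiction unless a shorter configuration exists. The curvature bookkeeping should depend sensitively on the specific exponents $4$ and $5$ in the presentation of $W$, consistent with the paper devoting a full section to each of the base case (Section~\ref{sec:base}) and the inductive step (Section~\ref{sec:ind}).
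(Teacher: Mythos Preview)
Your proposal has a genuine gap, and the paper's argument takes a rather different shape from what you sketch.

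The problematic step is going from the single $\langle s\rangle$-invariant simplex $\sigma$ in $B_{n-1}(A,X)$ to an $\langle r,s\rangle$-fixed simplex $B'$ with $d(A,B')<n$. Applying Theorem~\ref{thm:fix} to ``a suitable $\langle r,s\rangle$-invariant convex subcomplex'' is not something you can do here: the natural candidates (the residue of $B$, the ball $B_{n-1}(A,X)$, the orbit $\langle r,s\rangle\cdot\sigma$) are either not $\langle r,s\rangle$-invariant, not convex, or not systolic, and in any case the fixed-point theorem returns no distance information. You correctly flag that $B'$ might just be $B$, but the fallback you propose---building ``a short cycle out of $A$, $B$, $\sigma$, and $g\sigma$''---does not specify an actual cycle, and a cycle involving only one projection step would be much too short for a Gauss--Bonnet argument to bite.

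What the paper does instead is the following. It assumes for contradiction that $n$ is minimal. Using the Projection Lemma iteratively (with $Q$ the edge $\{w,sw\}$, not the whole clique), it builds an entire geodesic $P=v_0,\dots,v_n$ from $v\in A$ to $w\in B$ with the property that each consecutive block $\{v_{k-1},sv_{k-1},v_k,sv_k\}$ is a clique. One then checks that $P$ is disjoint from $rP$ and $tP$, so the concatenation $P\cdot tP\cdot trP\cdot rP$ is an honest cycle $\gamma$ of length $4n$. A minimal filling diagram $f:D\to X$ of $\gamma$ exists by Theorem~\ref{thm:exist}, and now the Gauss--Bonnet inequality $\sum_{\partial D}\kappa_\partial\ge 6$ is played off against a partition of $\partial D$ into four ``corner'' pieces $E_i$ (three vertices each, near $v,tv,w,tw$) and four ``side'' pieces $F_i$ (the rest of each translate of $P$). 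The heart of the section is a long case analysis---using the $s$-equivariance of $P$, the minimality of $n$, and the freedom to modify $f$ among minimal diagrams---showing that each $E_i$ contributes at most $1$ and each $F_i$ at most $0$ to the total boundary curvature, giving the contradiction $6\le 4$. So the filling-diagram machinery is applied to a single long cycle built from the whole geodesic and its translates, not to a local configuration near one projection step.
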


 With these two propositions we can prove Theorem \ref{Thm:fix} in the following way:

 \begin{proof} [Proof of Theorem \ref{Thm:fix}]
     By assumption we have that $W$ acts on a systolic complex $X$. By Theorem~\ref{thm:fix} there are cliques $A$ and $B$ fixed by the finite subgroups $\langle s,t\rangle$ and $\langle r,s\rangle$, respectively. We prove that there exists intersecting $A'$ and $B'$ fixed by $\langle s,t\rangle$ and $\langle r,s\rangle$ respectively by induction on $d(A,B)= n$. The base case for $n=1$ is Proposition~\ref{prop:base} and the induction step is Proposition~\ref{prop:ind}.

     Now let $v \in A' \cap B'$ and let $C = \langle r, t \rangle v$. By construction $C$ is invariant under $\langle r, t \rangle$ and intersects with $A'\cap B'$, so for $C$ to satisfy conditions of Theorem~\ref{Thm:fix} all that remains is to show that $C$ is a clique. Since $A'$ and $B'$ are cliques, we see that $v$ is adjacent or equal to $rv$ and $tv$. Therefore $X$ contains $(v,rv)$ and $(v,tv)$, so the vertices of $C$ form a cycle $\gamma$ contained in $X$. If $\gamma$ is of length $1$, $2$, or $3$ then it has no diagonals so $C$ is a clique. If $\gamma$ is of length $4$ and then by Remark~\ref{rm:sys -> diag} $X$ contains one of the diagonals of $\gamma$. Since both of the diagonals of $\gamma$ are in the same edge type, we get that both of the diagonals of $\gamma$ are contained in $X$. Therefore $C$ is a clique.     
 \end{proof}

\section{Base Case}
\label{sec:base}

In this section we prove Proposition~\ref{prop:base}. In the first part of the section we prove a number of structural lemmas used in the proof, which constitutes the second half of the section.

\begin{lemma}
    \label{D8L0}
    Suppose $\gamma$ is a $6$-cycle embedded in a systolic complex $X$. Then either $X$ contains a diagonal of $\gamma$ or there exists $v \in X^0$ adjacent to all the vertices of $\gamma$.
\end{lemma}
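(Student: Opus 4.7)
The plan is to assume that no diagonal of $\gamma$ is contained in $X$ and then use a minimal filling disc together with Gauss--Bonnet to produce a common neighbor of every vertex of $\gamma$.

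By Theorem~\ref{thm:exist} I choose a minimal filling diagram $f \colon D \to X$, which is simplicial, nondegenerate, and locally $6$-large, with $f|_{\partial D}$ an isomorphism onto $\gamma$. Label the boundary vertices of $D$ cyclically by $v_1, \dots, v_6$. By Remark~\ref{rem:curvInt} every interior vertex of $D$ has $\kappa(v) \leq 0$, so Lemma~\ref{lem:gauss} gives
\[
    \sum_{i=1}^{6}\bigl(3 - \angle(v_i)\bigr) \;=\; 6 \;-\; \sum_{v \in \mathrm{Int}\, D} \kappa(v) \;\geq\; 6.
\]
If $\angle(v_i) = 1$ for some $i$, the unique triangle at $v_i$ has vertex set $\{v_{i-1}, v_i, v_{i+1}\}$, so $D$ contains the edge $v_{i-1}v_{i+1}$; nondegeneracy of $f$ and embeddedness of $\gamma$ then force its image to be a diagonal of $\gamma$ in $X$, contradicting our assumption. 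Hence $\angle(v_i) \geq 2$ for every $i$, which forces the displayed inequality to be an equality and pins the geometry of $D$: $\angle(v_i) = 2$ at every boundary vertex and $\angle(v) = 6$ at every interior vertex.

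For each $i$ the two triangles at $v_i$ then share a common apex $x_i \in D^0$ adjacent to $v_{i-1}, v_i, v_{i+1}$. If $x_i$ were itself a boundary vertex, it would lie in $\{v_{i-2}, v_{i+2}, v_{i+3}\}$, and a short case check shows that at least one of the edges $x_i v_{i-1}, x_i v_i, x_i v_{i+1}$ would connect non-consecutive vertices of $\gamma$ and therefore yield a diagonal under $f$, a contradiction. So each $x_i$ is an interior vertex. Because the boundary edge $v_i v_{i+1}$ belongs to a unique triangle of $D$, the fan at $v_i$ and the fan at $v_{i+1}$ must share their triangle on this edge, forcing $x_i = x_{i+1}$. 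Iterating, all the $x_i$ coincide with a single interior vertex $x$, and $f(x)$ is adjacent in $X$ to every vertex of $\gamma$; its distinctness from each $f(v_i)$ follows from nondegeneracy of $f$ on the edges $v_i x$ together with one more application of the no-diagonal hypothesis.

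The main obstacle is the rigidity step: excluding $\angle(v_i) = 1$ on the boundary and certifying that each fan apex is interior both amount to extracting unwanted diagonals from the filling, and it is precisely the no-diagonal hypothesis that squeezes the angle distribution into the unique configuration of six triangles meeting at a central vertex. Once this is in place, the apex-matching at consecutive boundary edges is automatic from the uniqueness of the triangle on each boundary edge of a triangulated disc.
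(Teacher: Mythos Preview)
Your proof is correct and follows essentially the same route as the paper: take a minimal filling diagram, apply Gauss--Bonnet together with Remark~\ref{rem:curvInt} to get $\sum \kappa_\partial(v_i)\ge 6$, use the no-diagonal hypothesis to bound each $\kappa_\partial(v_i)\le 1$, and conclude that every boundary vertex has exactly two incident triangles. The paper then simply asserts that the boundary vertices are all adjacent to a common vertex of $D$; you have spelled out this last step carefully (the fan-apex argument and the matching $x_i=x_{i+1}$ along boundary edges), which is a welcome elaboration but not a different method.
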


\begin{proof}
    Suppose $X$ does not contain a diagonal of $\gamma$. By Theorem~$\ref{thm:exist}$, there exists a filling diagram of $\gamma$. By Remark~\ref{rem:curvInt} every vertex in the interior of $D$ has nonpositive curvature. Therefore by Lemma~\ref{lem:gauss}
    \[
        \sum_{v \in \partial D} \kappa_{\partial}(v) \geq 6 .
    \] Since $X$ does not contain a diagonal of $\gamma$ for every $v \in \partial D$ $\kappa_{\partial}(v) \leq 1$. Combining this fact with the inequality on the sum of the boundary curvatures we see that all the boundary vertices have boundary curvature exactly $1$. Therefore they are all adjacent to a common vertex in $D$.
\end{proof}

\begin{lemma}

\label{D8L1}
    Suppose $\gamma$ is an $8$-cycle in a systolic complex $X$ and $f: D \rightarrow X$ is a minimal filling diagram of $\gamma$. Then either $D$ is isomorphic to the diagram $P_8$ on the left in Figure~\ref{octs} or is isomorophic to the diagram $P_{10}$ on the right in Figure~\ref{octs} or some 0-cell on the boundary of $D$ has curvature $2$.
    \label{lem:P8_or_P10}
\end{lemma}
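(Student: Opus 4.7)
The plan is to apply the Gauss--Bonnet theorem (Lemma~\ref{lem:gauss}) to $D$ and then do casework on the number $n_i$ of interior vertices. Since $D$ is a triangulated $2$-disc, $\chi(D)=1$, and Gauss--Bonnet gives
\[
\sum_{v \in \partial D} \kappa_\partial(v) + \sum_{v \in \mathrm{Int}\,D} \kappa(v) = 6.
\]
By Remark~\ref{rem:curvInt}, each interior curvature is at most $0$. Suppose for contradiction that no boundary vertex has $\kappa_\partial = 2$. Then $\angle(v)\geq 2$ and $\kappa_\partial(v)\leq 1$ for every $v\in\partial D$, so $\sum_{v\in\partial D}\kappa_\partial(v)\in[6,8]$ and $\sum_{v\in\mathrm{Int}\,D}\kappa(v)\in[-2,0]$. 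Double-counting via $3T = \sum_v \angle(v)$, where $T$ is the number of $2$-simplices, together with the formulas $\kappa_\partial = 3-\angle$ and $\kappa = 6-\angle$, yields the identity $T = 2n_i + 6$ and the constraint $\sum_{v\in \mathrm{Int}\,D}(\angle(v)-6)\leq 2$; in particular every interior vertex has $\angle\in\{6,7,8\}$.

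I would then split into cases on $n_i$. For $n_i = 0$, the classical polygon ears lemma (any triangulation of an $n$-gon with $n\geq 4$ and no interior vertex has at least two ears, hence at least two boundary vertices with $\angle=1$) immediately contradicts our assumption. For $n_i = 1$, let $w$ be the interior vertex; if $\angle(w)=8$ then $w$ is adjacent to all of $\partial D^0$ and $D$ is the cone on $\partial D$, giving $D\cong P_8$, while if $\angle(w)\in\{6,7\}$ the link of $w$ together with portions of $\partial D$ cuts off sub-polygons that must be triangulated with no new interior vertex, and the ears lemma reapplies to produce a $\kappa_\partial = 2$ vertex. For $n_i = 2$, letting $N_k$ denote the number of $2$-simplices with exactly $k$ interior vertices, the identities $N_1 + 2N_2 + 3N_3 = \sum_{\mathrm{int}}\angle$ and $N_0 + N_1 + N_2 + N_3 = T = 10$ combined with $\sum_{\mathrm{int}}\angle \in [12,14]$ force the interior vertices $u, v$ to be adjacent with $\angle(u)=\angle(v)=6$ and $N_0 = 0$; this configuration recovers exactly $D\cong P_{10}$. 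Finally, for $n_i \geq 3$ the same bookkeeping yields $N_2 + 2N_3 \geq 4n_i - 6$, which I expect to be incompatible with the planar structure of an interior subcomplex on $n_i$ vertices of degree at most $8$, yielding a contradiction.

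The main obstacle will be the sub-polygon analysis for $n_i = 1$ and the structural counting for $n_i \geq 3$. In the $n_i = 1$ case one must handle the possibility that the link of $w$ intersects $\partial D$ non-consecutively, producing several residual sub-polygons each of which needs to be shown to admit an ear; in the $n_i \geq 3$ case one must convert the inequality $N_2 + 2N_3 \geq 4n_i - 6$ into a contradiction by a careful planarity-and-degree argument on the interior subcomplex. Once these are settled, identifying the exceptional configurations with $P_8$ and $P_{10}$ and verifying the claimed isomorphisms explicitly is routine.
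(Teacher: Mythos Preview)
Your approach is sound but takes a genuinely different route from the paper's. The paper never counts interior vertices. After deducing (as you do) that $\sum_{\partial}\kappa_\partial\ge 6$ and hence that at least six boundary vertices have $\kappa_\partial=1$, it cases on the relative positions of the at most two exceptional boundary vertices $a=v_0$ and $b\in\{v_1,v_2,v_3,v_4\}$. The key geometric observation is that a run of consecutive boundary vertices with $\kappa_\partial=1$ all share a common apex $w$, and this apex is automatically adjacent to the two boundary vertices flanking the run as well. In each of the four cases this directly builds enough of $D$ to recognise $P_8$ or $P_{10}$, or to exhibit an interior vertex with $\angle<6$. This is shorter and sidesteps any separate treatment of large $n_i$.

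Your cases $n_i\le 2$ are correct (for $n_i=1$ with $\angle(w)\in\{6,7\}$ one should note that the residual sub-polygons have at most four vertices, so an ear always sits at a vertex \emph{not} adjacent to $w$, giving $\kappa_\partial=2$ in $D$; and for $n_i=2$ your counting does pin down $P_{10}$, since $N_0=0$ forces the four link edges of $u$ not through $v$ to lie on $\partial D$). For $n_i\ge 3$, however, the inequality $N_2+2N_3\ge 4n_i-6$ by itself is not incompatible with planarity: planar graphs on $n_i$ vertices readily achieve this. The missing ingredient is that every bounded face of the interior $1$-skeleton $G$ lies in the open disc, hence contains no vertex of $D$ at all, and is therefore triangulated entirely by $2$-simplices with three interior vertices. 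This gives an exact formula $N_3=2n_i-2c-k_\infty$ (with $c$ the number of components of $G$ and $k_\infty$ the outer-face length), and feeding this together with $E_{ii}\le 3n_i-3-k_\infty$ back into your bookkeeping yields $N_0<0$. So your plan does close, but the ``planarity'' step is a real Euler-formula argument, not a one-line remark; the paper's boundary-pattern approach avoids it altogether.
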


\begin{figure} [!ht]
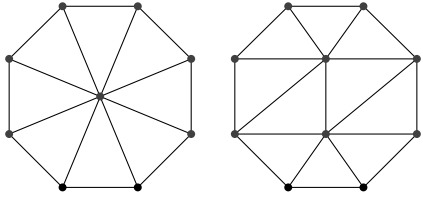

  \include{Graphics/Oct1}
  \caption{$P_8$ and $P_{10}$}
  \label{octs}
\end{figure}

\begin{proof}
    Let the vertices of $\gamma$ be labeled $v_0, \dots , v_7$. By Remark~\ref{rem:curvInt} every vertex in the interior of $D$ has nonpositive curvature. Therefore by Lemma~\ref{lem:gauss} we have
    \[
        \sum_{v \in \partial D} \kappa_{\partial}(v) \geq 6 .
    \]

     Suppose no 0-cell on the boundary of $D$ has curvature  2. Then at least 6 of the 0-cells on the boundary of $D$ have curvature $1$. Let $a$ and $b$ be the remaining two 0-cells on the boundary of $D$, which do not necessarily have to have curvature 1. We can assume up to relabeling that $a= v_0$. 

     First suppose that $b = v_1$. We see that all the 0-cells on the boundary of $D$ are adjacent to a common vertex. Therefore $D = P_8$.

     Now suppose $b = v_2$. Then $v_2, \dots ,v_7, v_0$  are adjacent to a common 0-cell, say $w$. Their configuration is shown in Figure \ref{oct2} . We see that $D$ contains the 4-cycle $v_1, v_2, w,v_0$, so $D$ must contain either $(v_2, v_0)$ or $(v_1,w)$. If $v_2$ is adjacent to $v_0$ then $v_1$ would have curvature 2 contradicting our assumption so $v_1$ is adjacent to $w$. Therefore $D = P_8$.

     \begin{figure} [!ht]
     \include{Graphics/Oct2}
     \caption{}
    \label{oct2}
\end{figure}

     Now suppose $b = v_3$. Then $v_3,v_4,v_5,v_6,v_7,v_0$ are adjacent to some common vertex, say $w$, and $v_0, v_1, v_2,v_3$ are connected to some common  vertex, say $x$. If $w=x$ then we are done otherwise their configuration is given by Figure  \ref{oct3} . We see that $D$ contains the 4-cycle $v_0, x, v_3, w$, therefore $D$ contains $(v_0, v_3)$ or $(x,w)$. In either case $\kappa(x) > 0$, which gives a contradiction. 

     \begin{figure}[!ht]
     \include{Graphics/Oct3}
     \caption{}
     \label{oct3}
     \end{figure}

     Now suppose $b= v_4$. Then $v_4,v_5,v_6,v_7,v_0$ are connected to a common  vertex, say $w$, and $v_0,v_1,v_2,v_3,v_4$ are connected to a common  vertex, say $x$. If $w =x$ then we are done, otherwise the configuration of $D$ is given by Figure~\ref{oct4}. We see that $D$ contains the $4$-cycle $v_0,x,v_4,w$, so $D$ contains either $(v_0,v_4)$ or $(x,w)$. If $D$ contains $(v_0,v_4)$ then $\kappa(x) > 0$, which gives a contradiction. Therefore $D$ contains $(x,w)$, so $D = P_{10}$.

     \begin{figure} [!ht]
     \include{Graphics/Oct4}
     \caption{}
     \label{oct4}
     \end{figure}

 \end{proof}

 \begin{lemma}

\label{lem:D8_diag->clique}
    Suppose $X$ is a systolic complex. Let $D_8$ be the dihedral group with presentation $\langle a,b| (ab)^4 = a^2 = b^2 = 1\rangle$ and assume it acts on $X$. Let $v \in X$ and assume that the cycle $v, av, abv, abav , \dots , abababav$ is contained in $X$, call it $O$. If any two non-consecutive vertices in the cycle $O$ are adjacent, then $O$ is a clique.  
\end{lemma}

\begin{proof}
      We use the labels $2,3,3',$ and $4$ to respectively refer to the edge types $(v, abv)$, $(v, abav)$, $(v, babv)$, and $(v, ababv)$. The labeling is shown in Figure~\ref{Dihedral1}. We use the notation $n \Rightarrow m$ to say that if $X$ contains the edge type $n$ then it needs to contain the edge type $m$. We claim the following implications.

    \[
        3 \text{ or } 3' \Rightarrow 2 \Rightarrow 4 \Rightarrow 3 \text{ and } 3' 
    \]
    These implications finish the proof because given that $X$ contains any edge type all the other edge types are implied to also be contained by $X$. Now we prove each implication individually.

    \begin{figure}
\begin{minipage}{0.35\textwidth}
                  \include{Graphics/Dihedral1}
                \caption{}
                \label{Dihedral1}
            \end{minipage}%
\begin{minipage}{0.35\textwidth}
               \include{Graphics/Dihedral2}
                \caption{}
                \label{Dihedral2}
            \end{minipage}%
\begin{minipage}{0.35\textwidth}
                \include{Graphics/Dihedral3}
                \caption{}
                \label{Dihedral3}
            \end{minipage}
\end{figure}

            Suppose that $X$ contains $2$. Then $X$ contains the 4-cycle $v,abv,ababv,bav$ shown in Figure~\ref{Dihedral2}. The 4-cycle has diagonals in $4$, so $X$ contains $4$.

             Suppose $X$ contains $4$. Then $X$ contains the 4-cycle $v,av, babv, ababv$, shown in  Figure \ref{Dihedral3}. The 4-cycle has diagonals in $3'$, so $X$ contains $3'$. Also, we see that $X$ contains the 4-cycle $v,bv,abav,ababv$, which has diagonals in $3$, so $X$ contains $3$.

            Suppose $X$ contains $3$. Then $X$ contains the 4-cycle $v,av,abv,abav$, which has diagonal in $2$ so $X$ contains $2$. Now suppose that $X$ contains $3'$, then $X$ contains the 4-cycle $v,bv, bav, babv$, which has diagonals in $2$, so $X$ contains $2$.
\end{proof}

\begin{lemma}
    Suppose $X$ is a systolic complex. Let $D_{10}$ be the dihedral group with presentation $\langle a,b| (ab)^5 = a^2 = b^2 = 1\rangle$ and assume it acts on $X$. Let $v \in X^0$ and assume that the 10-cycle $v, av, abv, abav , \dots , (ab)^4av$ is contained in $X$, call it $O$. If any two non-consecutive vertices in the cycle $O$ are adjacent then $O$ is a clique. 

    \label{lem:10cycle-diag->clique}
\end{lemma}

\begin{proof}
We label the edge types $(v,abv), (v,abav), (v,babv), (v,ababv), (v,ababav)$ as $2,3,3',4,$ and $5$. This labeling is shown in Figure \ref{10Dihedral1}.

\begin{figure} [!ht]
                \include{Graphics/10Dihedral1}
                \caption{}
                \label{10Dihedral1}
            \end{figure}

Using the same notation as we did in the proof of the previous lemma we claim the following implications hold,
\[
    3 \text{ or } 3' \Rightarrow 2 \Rightarrow 4 \Rightarrow 5 \Rightarrow 3 \text{ and } 3'
\]

Suppose that $X$ contains $3$. Then $X$ contains the 4-cycle $v, abav, abv, av$ shown in Figure \ref{10Dihedral2}, therefore contains one of its diagonals both of which are in $2$. Now suppose $X$ contains $3'$. Then $X$ contains the 4-cycle $v,babv,bav,bv$, therefore it contains one of its diagonal, both of which are in $2$.

\begin{figure}
\begin{minipage}{0.42\textwidth}
                  \include{Graphics/10Dihedral2}
               \caption{}
                \label{10Dihedral2}
            \end{minipage}%
\begin{minipage}{0.42\textwidth}
                \include{Graphics/10Dihedral3}
                  \caption{}
                \label{10Dihedral3}
            \end{minipage}%
\begin{minipage}{0.42\textwidth}
                \include{Graphics/10Dihedral4}
                \caption{}
                \label{10Dihedral4}
            \end{minipage}
\end{figure}

Suppose that $X$ contains $2$. Then $X$ contains the 5-cycle $v,abv, ababv,babav,bav$ shown in Figure \ref{10Dihedral3}. All the cycle's diagonals are in $4$, so $X$ contains $4$.

Suppose $X$ contains $4$. Then $X$ contains the 4-cycle $v,ababv, ababav,bv$ shown in Figure \ref{10Dihedral4}. $X$ has to contain at least one of the 4-cycle's diagonals and both diagonals are in $5$, so $X$ contains $5$.

Suppose $X$ contains $5$. Then $X$ contains the 4-cycle $v,av,babav,ababav$ shown in Figure \ref{10Dihedral5}. This figure has diagonals in $4$ so $X$ contains $4$. Therefore $X$ contains the 5-cycle $v, av, abv, abav, ababv$ shown in Figure \ref{10Dihedral6}. Any triangulation of this 5-cycle contains an edge in $2$, so $X$ contains $2$. Therefore $X$ contains the 4-cycle $v,abv,babv,ababav$, shown in Figure \ref{10Dihedral7}. The diagonals of the 4-cycle are in $3'$ therefore $X$ contains $3'$. Also $X$ contains the 4-cycle $v,abv,abav,ababav$, which has diagonals in $3$ therefore $X$ contains $3$. 

\begin{figure}
\begin{minipage}{0.42\textwidth}
                \include{Graphics/10Dihedral5}
                  \caption{}
                \label{10Dihedral5}
            \end{minipage}%
\begin{minipage}{0.42\textwidth}
                \include{Graphics/10Dihedral6}
                  \caption{}
                \label{10Dihedral6}
            \end{minipage}%
\begin{minipage}{0.42\textwidth}
                \include{Graphics/10Dihedral7}
                  \caption{}
                \label{10Dihedral7}
            \end{minipage}
\end{figure}

\end{proof}

\begin{lemma}
Suppose $X$ is a systolic complex and $O$ is an 8-cycle embedded in $X$. Suppose $r$ and $t$ are simplicial automorphisms of $X$, which reflect $O$ over the axes shown Figure~\ref{BCase5G}. Then there cannot be a minimal filling diagram of $O$ with domain $P_{10}$.
% Suppose there exists a filling diagram of $O$, with domain $P_{10}$. Then either $X$ contains $(v,tv)$ or $(w,rw)$ or there exists a filling diagram of $O$, with domain $P_8$.
\label{lem:8cyl:P8->P10}  
\end{lemma}

\begin{figure}[ht!]
    \include{Graphics/BCase5G}
    \caption{}
    \label{BCase5G}
\end{figure}

\begin{figure} [!tbp]
    \centering
    \begin{minipage}{0.3\textwidth}
    \centering
        \include{Graphics/Case5B}
        \caption{}
        \label{Case5B}     
    \end{minipage}%
       \begin{minipage}{0.3\textwidth}
    \centering
        \include{Graphics/BCase5C}
         \caption{}
        \label{Case5C}
    \end{minipage}%
    \begin{minipage}{0.3\textwidth}
    \centering
        \include{Graphics/BCase5A}
        \caption{}
        \label{Case5A}
    \end{minipage}%
    \begin{minipage}{0.3\textwidth}
    \centering
        \include{Graphics/BCase5D}
         \caption{}
        \label{Case5D}
    \end{minipage}
\end{figure} 

\begin{proof}
Suppose there exists a minimal filling diagram $f$ with domain $P_{10}$. We label the internal vertices of $P_{10}$  $x$ and $y$ as shown in Figure \ref{Case5B}. We see that $X$ contains the 4-cycle $ry,tv,x,v$ shown in Figure \ref{Case5C}. 

Suppose $X$ contains $(v,tv)$ one of the diagonals of the $4$-cycle $ry, tv,x,v$. Then we can construct a filling diagram of $O$ by constructing a filling diagram of the $6$-cycle $v,tv,trv,rw,rw,rv$ and a filling diagram of the $4$-cycle $v,w,tw,tv$. The $4$-cycle has a diagonal, so has a filling diagram with a domain containing two $2$-cells. By Lemma~\ref{D8L0} either $X$ contains a diagonal of the $6$-cycle or there is a vertex adjacent to all the vertices in the $6$-cycle. Either way we can produce a filling diagram of the $6$-cycle with a domain containing at most six $2$-cells. Therefore we can produce a filling diagram of $O$ with a domain containing at most eight $2$-cells contradicting the minimality of $f$.

Suppose $X$ contains $(x,ry)$, the other diagonal of the $4$-cycle $ry, tv, x,v$. By a similar argument, we can show $X$ contains $(y,rx)$. Therefore $X$ contains the 4-cycle $x,y,rx,ry$, so $X$ contains either $(x,rx)$ or $(y,ry)$. Without loss of generality assume $X$ contains $(y,ry)$ if instead $X$ contains $(x,rx)$ we can proceed similarly (swapping $x$ with $y$ in the argument). Now we see that $X$ contains the 4-cycle $y,rv,v,ry$ seen in Figure \ref{Case5A}. Both of the cycle's diagonals are in the edge type $(y,v)$, so $X$ contains $(y,v)$. Therefore $X$ contains the 5-cycle $y,v,w,tw,tv$ seen in Figure \ref{Case5D}. Any $5$-cycle in a systolic complex has a filling diagram with a domain containing no more than three $2$-cells. Therefore we can produce a filling diagram of $O$ with a domain containing no more than eight $2$-cells contradicting the minimality of $f$.
\end{proof}

\begin{lemma}

Suppose $X$ is a systolic complex, which $W$ acts geometrically on. Let $v \in X^0$ and assume the 8-cycle $O$ given by $v, rv,rsv, \dots, r(sr)^3v$ is contained in $X$. Then either $O$ is a clique or  there is a filling diagram $f:P_8 \rightarrow X$ of $O$.
\label{lem:8cycl_P8_or_clique} 

\end{lemma}

\begin{proof}
    If $X$ contains a diagonal of $O$ then by Lemma~\ref{lem:D8_diag->clique} $O$ is a clique, so assume not. Therefore by Lemma~\ref{lem:P8_or_P10} we know there exists a minimal filling diagram of $O$, $f:D \rightarrow X$, such that $D= P_8$ or $P_{10}$. However if $D = P_{10}$ then we can apply Lemma~\ref{lem:8cyl:P8->P10} with $w = sv$ and $t = srs$ to get a contradiction. Therefore $D = P_8$.
\end{proof}

 \begin{lemma}
     Suppose $X$ is a systolic complex, which $W$ acts on geometrically. Let $v \in X^0$. Assume $\langle s,t\rangle v$ and $\langle t,r\rangle v$ are cliques. Then either $\langle s,r\rangle v$ is a clique or there is some other vertex $v'$ such that $\langle s,r\rangle v', \langle t,r\rangle v',$ and $\langle s,t\rangle v'$ are cliques.

     \label{lem:tv->cliques}
 \end{lemma}

 \begin{proof} 

   If $X$ contains any diagonal of $\langle s, r \rangle v$ then by Lemma~\ref{lem:D8_diag->clique} $\langle s, r \rangle v$ is a clique. Otherwise by Lemma~\ref{lem:8cycl_P8_or_clique} there exists a filling diagram of $\langle s, r \rangle v$, $f:P_8 \rightarrow X$. Let $w$ be the internal vertex in $P_8$. We see that $X$ contains the 4-cycles $w,rsrv, rstv,rv$ and $w,sv,tv,rv$ seen in Figure \ref{Th6P3}. These 4-cycles have  $(sv,rv)$ and  $(rsrv, rv)$ as diagonals. If either of these edges are contained in $X$  then by Lemma \ref{lem:D8_diag->clique} $\langle s,r\rangle v$ would be a clique. Therefore assume that the other two diagonals of these 4-cycles are contained in $X$ namely $(w,tv)$ and $(w,rstv)$. Then $X$ contains the 5-cycle $w,tv,tw,trsv,rstv$ shown in Figure~\ref{Th6P9}. If $X$ contains $(tv,trsv)$ then by Lemma \ref{lem:D8_diag->clique} $\langle s, r \rangle v$ is a clique, so assume not. Therefore $X$ needs to contain either the diagonal $(tv,rstv)$ or $(w,tw)$. 

   \begin{figure}
\begin{minipage}{0.4\textwidth}
\centering
                \include{Graphics/Th6P3}
                  \caption{}
                \label{Th6P3}
    \end{minipage}%
    \begin{minipage}{0.4\textwidth}
    \centering
       \include{Graphics/Thm6P2}
       \caption{}
       \label{Th6P2}
    \end{minipage}
    \end{figure}

    \begin{figure}
        \centering
        \include{Graphics/Th6P9}
                  \caption{}
                \label{Th6P9}
    \end{figure}

    First assume that $X$ contains $(tv,rstv)$. We claim that $v' = tv$ satisfies conditions of lemma. The edge $(tv,tsrv)$ is a diagonal of the 8-cycle $ \langle s, r \rangle v'$ as seen in Figure \ref{Th6P2}. Therefore by Lemma~\ref{lem:D8_diag->clique}  $ \langle s, r \rangle v'$ is a clique. By assumption $\langle s,t \rangle v'$ and $\langle r,t \rangle v'$ are cliques so we are done.

    Now assume that $X$ does not contain $(tv, rstv)$ so that $X$ contains $(w,tw)$. Then we claim that $v' = w$ satisfies the conditions of the lemma. If $w_1$ and $w_2$ are two 0-cells in $\langle r, s \rangle w$ then $X$ contains the 4-cycle $w_1,sv,w_2,rv$. Therefore $X$ contains $(w_1, w_2)$, so  $\langle r, s \rangle w$  is a clique. The same argument that we used to show that $X$ contains $(w,tw)$ can be used to show that $X$ contains $(sw,tw)$, which is a diagonal of the 10-cycle $\langle s, t \rangle w$ so by Lemma~\ref{lem:10cycle-diag->clique} the 10-cycle is a clique. The 4-cycle $\langle r, t \rangle w$ is trivially a clique. Therefore $w=v'$ satisfies the conditions of the lemma.
 \end{proof}

 \begin{lemma}
      Suppose $X$ is a systolic complex, which $W$ acts on geometrically. Let $v \in X^0$. Suppose the 10-cycle $O$ given by $v, sv, stv, \dots, (st)^4sv$, is contained in $X$. Then there is some vertex $w$ adjacent to every vertex of $O$ or $O$ is a clique.
      \label{lem:10-cyc-1Int}
 \end{lemma}

 \begin{proof}
     If $X$ contains any diagonal of $\langle s, t \rangle v$,  then $\langle s, t \rangle v$ is a clique by Lemma \ref{lem:10cycle-diag->clique} so assume not. We proceed by showing the existence of points adjacent to increasing collections of consecutive points of $\langle s, t \rangle v$.

     \begin{figure}
\begin{minipage}{0.4\textwidth}
\centering
                \include{Graphics/Lem10P1}
                  \caption{}
                \label{Lem10P1}
    \end{minipage}%
    \begin{minipage}{0.4\textwidth}
    \centering
       \include{Graphics/Lem10P2}
       \caption{}
       \label{Lem10P2}
    \end{minipage}
 \end{figure}

     First we show that there exists a $w \in X^0$ adjacent to $5$ consecutive points of the 10-cycle $\langle s, t \rangle v$. By Theorem~\ref{thm:exist} there exists a minimal filling diagram of $\langle s, t \rangle v$, $f:D \rightarrow X$. By Remark~\ref{rem:curvInt} for any $v \in $Int$D$, we have $\kappa(v) \leq 0$. This inequality together with Lemma~\ref{lem:gauss} gives us that
     \[
          \sum_{v \in \partial D} \kappa_{\partial}(v) \geq 6.
      \] If $3$ consecutive vertices on the boundary have curvature $1$, then there is a vertex adjacent to 5 consecutive 0-cells of the $\langle s, t \rangle v$, so assume not. No boundary vertex has curvature $2$ because by assumption $X$ contains no diagonals of $\langle s, t \rangle v$. If any boundary 0-cell has curvature less than 0, then the remaining $9$ boundary 0-cells would have curvature at least $7$, however there is no valid configuration of curvatures with the assumption that no 3 consecutive boundary vertices have curvature 1. Therefore all boundary vertices have curvature $0$ or $1$. Up to symmetry there are $3$ configurations of possible curvatures of the boundary vertices these are shown in Figure~\ref{Lem10P1}, Figure~\ref{Lem10P2}, and Figure~\ref{Lem10P3}.

     \begin{figure}
     \begin{minipage}{0.4\textwidth}
    \centering
       \include{Graphics/Lem10P3}
       \caption{}
       \label{Lem10P3}
    \end{minipage}%
    \begin{minipage}{0.4\textwidth}
    \centering
       \include{Graphics/Lem10P6}
       \caption{}
       \label{Lem10P6}
    \end{minipage}
    \end{figure}

      In Figure~\ref{Lem10P1} and Figure~\ref{Lem10P2} we see that the 4 internal vertices of the partial configurations constitute 4-cycles contained in $X$. However, there is no way to triangulate these 4-cycles so that all the internal vertices have nonpositive curvature so we get contradictions. The partial diagram in \ref{Lem10P3} is part of a unique minimal filling diagram shown in Figure~\ref{Lem10P6}. Up to rotation this filling diagram can be labeled as in Figure~\ref{Lem10P9}. We see that $X$ contains the 4-cycle $tx,tsv,z,tstsv$ which implies that $X$ contains $(tx,z)$. Therefore $X$ contains the 6-cycle $\gamma = tx,z,y,x,tz,tw$ show in Figure~\ref{Lem10B3}. By Lemma~\ref{D8L0} we see that there is a vertex adjacent to all the vertices in $\gamma$ or $X$ contains a diagonal of $\gamma$.

     \begin{figure}
     \begin{minipage}{0.4\textwidth}
    \centering
       \include{Graphics/Lem10P9}
       \caption{}
       \label{Lem10P9}
    \end{minipage}%
    \begin{minipage}{0.4\textwidth}
    \centering
       \include{Graphics/Lem10B3}
       \caption{}
       \label{Lem10B3}
    \end{minipage}
    \end{figure}

     First let us assume that there is a single vertex $a$ adjacent to every vertex in $\gamma$. Then $X$ contains the 5-cycle $a,tx,tv,v,x$ shown in Figure~\ref{Lem10B1}. If $X$ contains $(x,tv)$ or $(tx,v)$ then $x$ is adjacent to 5 consecutive vertices in the 10 cycle, so assume not. Then $X$ contains $(a,v)$ and $(a,tv)$. By a symmetric argument $X$ contains $(a,ststv)$ and $(a,ststsv)$. Then $X$ contains the 4-cycle $a,tv,tsv,z$ as seen in Figure~\ref{Lem10B2}. If $X$ contains $(z,tv)$ then $z$ is adjacent to 5 consecutive vertices so assume not. Then $X$ contains $(a,tsv)$. Then $X$ contains the 5-cycle $a,tsv,tstv,tstsv,ststsv$ and since $X$ does not contain any diagonals of the 10-cycle we have that $X$ contains $(a,tstv)$ and $(a,tstsv)$. Therefore $a$ is adjacent to more than 5 consecutive vertices of the 10-cycle so we are done.

     \begin{figure}
     \begin{minipage}{0.4\textwidth}
    \centering
       \include{Graphics/Lem10B1}
       \caption{}
       \label{Lem10B1}
    \end{minipage}%
    \begin{minipage}{0.4\textwidth}
    \centering
       \include{Graphics/Lem10B2}
       \caption{}
       \label{Lem10B2}
    \end{minipage}
    \end{figure}

     Now assume that instead $X$ contains some diagonal of $\gamma$. If $X$ contains the diagonal $(x,tx)$ then $X$ contains the 4-cycle $x,tx,tv,v$. Both of its diagonals produce a vertex adjacent to at least 5 consecutive vertices. By a symmetric argument if $X$ contains $(tz,z)$ then we can produce a vertex adjacent to at least 5 consecutive vertices, so assume $X$ contains neither $(x,tx)$ nor $(tz,z)$. If $X$ contains $(x,z)$ then $X$ contains the 4-cycle $x,z,tx,tz$ which has diagonals $(x,tx)$ and $(z,tz)$, which are by assumption not contained in $X$, so $X$ does not contain $(x,z)$. Now with our assumptions $X$ has to contain $(tw,y)$. So $X$ contains the 5-cycle $tw,sv,stv,stsv, y$ shown in Figure~\ref{Lem10B4}. If either $tw$ or $y$ are adjacent to two additional vertices in the 10-cycle we are done and if not $X$ contains $(tw,stv)$ and $(y,stv)$. Then $X$ contains the 5-cycle $v,w,y,stv,sv$ shown in Figure~\ref{Lem10B5}. If $X$ contains $(y,sv)$ then $y$ is adjacent to 5 consecutive vertices and if not then $X$ contains $(w,sv)$ and $(w,stv)$ making $w$ adjacent to 5 consecutive vertices in the 10-cycle, so we are done.

        \begin{figure}
     \begin{minipage}{0.4\textwidth}
    \centering
       \include{Graphics/Lem10B4}
       \caption{}
       \label{Lem10B4}
    \end{minipage}%
    \begin{minipage}{0.4\textwidth}
    \centering
       \include{Graphics/Lem10B5}
       \caption{}
       \label{Lem10B5}
    \end{minipage}
    \end{figure}

     Now assume there is some vertex $w$ adjacent to $5$ consecutive vertices of the 10-cycle as in Figure~\ref{Lem10B7}. Then $X$ contains the 4-cycle $w, sv, stw, tv$ show in Figure~\ref{Lem10B8}. Therefore $X$ contains $(w,stw)$. Therefore $X$ contains the 5-cycle $w,stw,(st)^2w, (st)^3w,(st)^4w$ (see Figure~\ref{Lem10B9}). Therefore $X$ contains $(w,(st)^2w)$. Then $X$ contains the 5-cycle $w,(st)^2w, ststsv,tstsv,tstv$, so $X$ contains one of $(w,tstsv)$ or $((st)^2w, tstsv)$ showing that $X$ contains a vertex adjacent to 6 consecutive vertices of the 10-cycle.

        \begin{figure}
     \begin{minipage}{0.4\textwidth}
    \centering
       \include{Graphics/Lem10B7}
       \caption{}
       \label{Lem10B7}
    \end{minipage}%
    \begin{minipage}{0.4\textwidth}
    \centering
       \include{Graphics/Lem10B8}
       \caption{}
       \label{Lem10B8}
    \end{minipage}%
    \begin{minipage}{0.4\textwidth}
    \centering
       \include{Graphics/Lem10B9}
       \caption{}
       \label{Lem10B9}
    \end{minipage}
    \end{figure}

     Now assume a vertex $w$ is adjacent to 6 consecutive vertices as in Figure~\ref{Lem10B10}. $X$ contains the 4-cycle $w,stv,stw,ststv$ therefore $X$ contains $(w,stw)$. Therefore $X$ contains the 5-cycle $w,stw,(st)^2w, (st)^3w, (st^4)w, (st)^5w$ so $X$ contains $(w, (st)^2 w)$. Therefore $X$ contains the 4-cycle $w, (st)^2w,tsv,tv$ shown in Figure~\ref{Lem10B12}. Therefore $w$ is adjacent to 7 consecutive vertices of the 10-cycle.

      \begin{figure}
     \begin{minipage}{0.4\textwidth}
    \centering
       \include{Graphics/Lem10B10}
       \caption{}
       \label{Lem10B10}
    \end{minipage}%
    \begin{minipage}{0.4\textwidth}
    \centering
       \include{Graphics/Lem10B12}
       \caption{}
       \label{Lem10B12}
    \end{minipage}
    \end{figure}

     Now assume a vertex $w$ is adjacent to 7 consecutive vertices. The 4-cycle $w,sv,tw,tsv,w$ shows that $X$ contains $(w,tw)$. Then $X$ contains the 4-cycle $w,tw,ststsv,ststv$ shown in Figure~\ref{Lem10B13}. Therefore $w$ is adjacent to 8 consecutive vertices.

      \begin{figure}
     \begin{minipage}{0.4\textwidth}
    \centering
       \include{Graphics/Lem10B13}
       \caption{}
       \label{Lem10B13}
    \end{minipage}%
    \begin{minipage}{0.4\textwidth}
    \centering
       \include{Graphics/Lem10B14}
       \caption{}
       \label{Lem10B14}
    \end{minipage}
    \end{figure}

     Now assume a vertex $w$ is adjacent to 8 consecutive vertices as in Figure~\ref{Lem10B14}. Then $w,tsv,tstv,tstsv,ststsv$ is a 5-cycle, therefore $w$ is adjacent to every vertex of the 10-cycle.

    \end{proof}

 \begin{lemma}
     Suppose $X$ is a systolic complex, which is acted on geometrically by $W$. Let $v \in X^0$ such that $\langle r, s \rangle v$ and $ \langle t, r \rangle v$ are cliques. Then either $\langle s, t \rangle v$ is a clique or there is some other vertex $v'$ such that $\langle r, s \rangle v', \langle t,r \rangle v',$ and $\langle s, t \rangle v'$ are cliques.

     \label{lem:wr->cliques}
 \end{lemma}

 \begin{proof}

 If $X$ contains any diagonal of $\langle s, t \rangle v$ then by Lemma \ref{lem:10cycle-diag->clique} it is clique, so assume not. Then by Lemma~\ref{lem:10-cyc-1Int} there is a vertex $w$ adjacent to all the vertices of $\langle s,t \rangle v$. Then $X$ contains the 4-cycles $w,tv,tsrv,tstv$ and $w,sv,rv,tv$ shown in Figure~\ref{10-cycA1}, which show that $X$ contains $(w,tsrv)$ and $(w,rv)$ respectively. Therefore $X$ contains the 5-cycle $w,rv,rw, trsv,tsrv$ seen in Figure~\ref{10-cycA2}. By assumption $X$ does not contain $(rv,trsv)$, therefore $X$ contains $(w,rw)$ or $(rv,tsrv)$. 

 \begin{figure} [!ht]
        \center
        \begin{minipage}{0.6\textwidth}
        \center
                \include{Graphics/10-cycA1}
                \caption{}
                \label{10-cycA1}
            \end{minipage}%
            \begin{minipage}{0.6\textwidth}
            \center
                 \include{Graphics/10-cycA2}
                \caption{}
                \label{10-cycA2}
            \end{minipage}
    \end{figure}

 First we deal with the case where $X$ contains $(rv,tsrv)$. Then we claim $v'=rv$ satisfies the conditions of the lemma. It is already clear that $\langle r,s \rangle v'$ and $ \langle t,r \rangle v'$ are cliques. Then since $(rv,tsrv)$ is a diagonal of the 10-cycle $\langle s,t \rangle v'$ by Lemma~\ref{lem:D8_diag->clique} $\langle s,t \rangle v'$ is a clique.

 Now assume that $X$ does not contain $(rv,tsrv)$, then it needs to contain $(w,rw)$. We claim that $v'=w$ satisfies the conditions of the lemma. If $w_1$ and $w_2$ are two vertices of $\langle s,t \rangle v'$, then since $s$ and $t$ fix $\langle s, t \rangle v $ we have that $X$ contains the 4-cycle $w_1,v,w_2,stv $, which shows that $X$ contains $(w_1,w_2)$. Therefore $\langle s,t \rangle v'$ is a clique. We can reuse the argument that showed that $X$ contained $(w,rw)$ to show that $X$ contains $(sw,rw)$, which is a diagonal of the 8-cycle $\langle r, s \rangle v'$ so by Lemma~\ref{lem:D8_diag->clique} $\langle r, s \rangle v'$ is a clique. Then since $\langle t, r \rangle v'$ is trivially a clique we are done. 
 \end{proof}

Now we are ready to prove the main result of this section Proposition~\ref{prop:base}.

\begin{proof}[Proof of Proposition~\ref{prop:base}]

Let $v$ and $w$ be vetices of $A$ and $B$, respectively, such that $d(v,w) = d(A,B)$. Using the group action we infer the configuration of edges in $X$ shown in Figure~\ref{octand10}. 

        \begin{figure} [!ht]
            \include{Graphics/OctAnd10Gon}
            \caption{}
            \label{octand10}
        \end{figure}

        % \begin{claim}
        %     There is a 1-cell with vertices $v$ and $tv$ or there is a 1-cell with vertices $w$ and $rw$ or there exist $v'$ such that $\langle s,t \rangle v', \langle t , r \rangle v$ and $\langle r,s \rangle$ are cliques.
        % \end{claim}

        % \begin{proof}[Proof of Claim]
            Let $C$ be the $8$-cycle $v,w,tw,tv,trv,trw,rw,rv$ as seen in Figure \ref{octand10}. We label the edge types $(v,rw)$ and $(v,tw)$ by {\bf 2} and {\bf 2'}. We label the edge types $(v,trv)$ and $(w,trw)$ by {\bf 4} and {\bf 4'}. 

            We proceed by proving a collection of implications involving conditions on $X$. The implications are summarized in Figure~\ref{proofDiag}. The condition given by $P_{8}$ in the figure is that $C$ has a filling diagram with  domain $P_8$. The conditions given by edge labels are that $X$ contains those edges. The arrows indicate that every condition implies that $X$ satisfies some condition further to the right in the figure (not necessarily the condition at the head of an arrow).

             By Lemma~\ref{D8L1} there exists a minimal filling diagram $f:D \rightarrow X$ of $C$, such that $D = P_8$, $D= P_{10}$, or there is a vertex on the boundary of $D$ with boundary curvature $2$. By Lemma~\ref{lem:8cyl:P8->P10} $D \neq P_{10}$. Therefore in Figure~\ref{proofDiag} we know that either $X$ satisfies $P_8$ or $2$ or $2'$. Therefore it is enough to show that every condition implies that $X$ satisfies a condition further to the right to show the proposition. 

            \begin{figure}[!ht]
            \[
                \mathbf{2} \text{ or } \mathbf{2}' \rightarrow P_8 \rightarrow \mathbf{2} \text{ and } \mathbf{2}' \rightarrow \mathbf{4} \text{ or } \mathbf{4}' \rightarrow (v,tv) \text{ or } (w,rw) \rightarrow \text{Prop}~\ref{prop:base}
            \]
            % \begin{tikzpicture}
            %      \matrix (m) [matrix of math nodes, row sep=2em,column sep=2em]
            %      {
            %         & {\bf 2} &  & & & &\\
            %         P_{10} & P_8 & {\bf 2} \text{ and } {\bf 2'} & {\bf 4} \text{ or } {\bf 4'}& (v,tv) \text{ or } (w,rw) & \text{Prop~\ref{prop:base}} \\ 
            %          & {\bf 2'} & & & &\\
            %      };
            %      \path[-stealth]
            %      (m-1-2) edge (m-2-2)
            %      (m-2-1) edge (m-2-2)
            %      (m-2-2) edge (m-2-3)
            %      (m-2-3) edge (m-2-4)
            %      (m-2-4) edge (m-2-5)
            %      (m-2-5) edge (m-2-6)
            %      (m-2-4) edge (m-2-5)
            %      (m-3-2) edge (m-2-2)

            %      ;
            %  \end{tikzpicture}
             \caption{}
             \label{proofDiag}
             \end{figure}

              {\bf 2} or {\bf 2$'$} $\rightarrow P_8$:

        Assume $X$ contains {\bf 2$'$}, the case where $X$ contains {\bf 2} is handled similarly. We see that $X$ contains the 6-cycle $v,tw, tv, trv, rw, rv$. By Lemma~\ref{D8L0} either there is a vertex $x$ adjacent to every vertex of the 6-cycle or $X$ contains some diagonal of the 6-cycle.

         First assume $X$ contains some diagonal of $C$. Any diagonal of $C$ splits $C$ into cycles of length less than $5$, which need to have triangulations since $X$ is systolic, so $C$ has some triangulation in $X$.  If $X$ contains $(v,rw)$ or $(tw,trw)$ then $X$ contains {\bf 2} and {\bf 2}'  so assume not.  If $X$ contains $(rv,tv)$ or $(rv,trv)$ then we are done, so assume $X$ does not contain these edges. These assumption are shown in Figure \ref{6filling}. This allows for only one valid triangulation shown in Figure \ref{6filling2}. Therefore $X$ contains $(rw,tw)$ in edge type {\bf 4}$'$.

        \begin{figure} [!ht]
        \center
        \begin{minipage}{0.3\textwidth}
        \center
                \include{Graphics/6cyclefilling}
                \caption{}
                \label{6filling}
            \end{minipage}%
            \begin{minipage}{0.3\textwidth}
            \center
                 \include{Graphics/6cyclefilling2}
                \caption{}
                \label{6filling2}
            \end{minipage}%
            \begin{minipage}{0.3\textwidth}
        \center
                \include{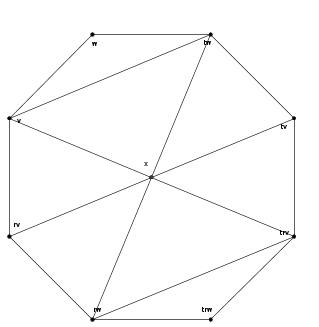}
                \caption{}
                \label{6filling3}
            \end{minipage}
            \end{figure}

    Now assume there is a vertex $x$ adjacent to all of the vertices in the 6-cycle as in Figure~\ref{6filling3}. The 4-cycle $x,v,w,tv$ has diagonals $(w,x)$ and $(v,tv)$. If $X$ contains $(v,tv)$ then the claim is true, so assume $X$ instead contains $(w,x)$. The 4-cycle $x,trv,trw, rv$ has diagonals $(rv,trv)$ and $(x,trw)$. If $X$ contains $(rv,trv)$ then we are done, so assume not, then $X$ contains $(x,trw)$. Now $x$ is adjacent to every vertex of $C$ so $P_8$ is a filling diagram of $C$.

$P_8$ $\rightarrow$ {\bf 2} and {\bf 2$'$}:

 Assume $X$ does not contain $(sx,x), (sv, tsv),$ and $(sw,rsw)$. First assume for contradiction that $X$ does not contain {\bf 2}.  $X$ contains the 4-cycle $x,rv,sv, w$ shown in Figure~\ref{1InternalVertB1}. By assumption $X$ does not contain {\bf 2}, so $X$ must contain $(x,sv)$. Therefore $X$ contains the 5-cycle $x,sv,sw,tsw,tsv$ as seen in Figure~\ref{1InternalVertB2}. By assumption $(sv,tsv)$ is not contained in $X$, so $X$ has to contain either $(x,sw)$ or $(x,tsw)$. Assume without loss of generality that $X$ contains $(x,sw)$, if not then we can let $x'=sx$ and then apply the subsequent argument to $x'$. Therefore $X$ contains the 5-cycle $x,sw,sx,srv,rv$ shown in Figure~\ref{1InternalVertB3}. By assumption $X$ does not contain  the diagonals $(sw, srv)$ and $(x,sx)$. Therefore $X$ must contain the diagonal $(sw,rv)$. Therefore $X$ contains the 4-cycle $rv,sw,w,srv$ which has diagonals solely in {\bf 2} contradicting our assumption. Therefore $X$ contains {\bf 2}. To show that $X$ contains {\bf 2}$'$ we use a symmetric argument by swapping the labels (ie replace $v$ by $w$).

 \begin{figure} [!tbp]
    \centering
    \begin{minipage}{0.5\textwidth}
    \centering
                \include{Graphics/1InternalVertexOctB1}
                \caption{}
                \label{1InternalVertB1}
    \end{minipage}%
    \begin{minipage}{0.5\textwidth}
    \centering
                \include{Graphics/1InternalVertexOctB2}
                \caption{}
                \label{1InternalVertB2}
    \end{minipage}
    \end{figure}

    \begin{figure}[!tbp]
    \centering
                \include{Graphics/1InternalVertexOctB3}
                \caption{}
                \label{1InternalVertB3}
    \end{figure}

    % \begin{figure} [!tbp]
    % \centering
    % \begin{minipage}{0.5\textwidth}
    % \centering
    %             \include{Graphics/1InternalVertexOct}
    %             \caption{}
    %             \label{1InternalVertB3}
    % \end{minipage}%
    % \begin{minipage}{0.5\textwidth}
    % \centering
    %             \include{Graphics/1InternalVertexOct3}
    %             \caption{}
    %             \label{1InternalVertB4}
    % \end{minipage}
    % \end{figure}

Now we deal with the case where $X$ contains $(x,sx)$. We claim that $A' = \langle s,t \rangle x$ and $B' = \langle r, s \rangle x$ satisfy the proposition. To see that $\langle t, r \rangle x$ is a clique observe that $r$ and $t$ fix $C$ so every point in $\langle r,t \rangle x$ is adjacent to every point in $C$. Therefore if $v_1, v_2 \in \langle r, t \rangle x $ then $v_1,v,v_2,rw$ is a 4-cycle and by assumption $X$ does not contain $(v,rw)$ so $v_1$ and $v_2$ are adjacent. Therefore $\langle r, t \rangle x$ is a clique. The points $rx$ and $tx$ are adjacent to $sx$ by the same argument that was used to show $x$ was adjacent to $sx$. Therefore by Lemma~\ref{lem:D8_diag->clique} and Lemma~\ref{lem:10cycle-diag->clique} the sets $\langle t , r \rangle x$ and $\langle r,s \rangle x$ are cliques. 

If $X$ contains $(sv,tsv)$, then we label $sv$ as $v'$. Therefore $\langle r, s \rangle v'$ is a clique fixed by $\langle r , s \rangle$ at distance $1$ from $\langle s, t \rangle w$. We see that $X$ contains the edge $(v', tv')$, so we let $v'$ take the place for $v$ and we are done. A symmetric argument is used for if $X$ contains $(sw,rsw)$ by switching the labels.
            
    {\bf 2} and {\bf 2$'$} $\rightarrow$ {\bf 4} or {\bf 4$'$}:

        $X$ contains the 4-cycle $v, tw, trv, rw$ as shown in Figure~\ref{4Oct}. $X$ contains one of its diagonals $(v,trw)$ or $(rw,tw)$ both of which imply that $X$ contains an edge between 0-cells at distance 4 in $C$.

     {\bf 4} or {\bf 4$'$} $\rightarrow (v,tv) $ or $(w,rw)$:

             If $X$ contains $(v,trv)$ then $X$ contains the 4-cycle $v,trv,tv,rv$ shown in Figure~\ref{4Oct2} with diagonals in the edge type $(v,tv)$. Likewise if $X$ contains $(w,trw)$ then $X$ contains $(w,rw)$ by a similar argument.

            \begin{figure} [!ht]
        \center
            \begin{minipage}{0.45\textwidth}
            \center
                 \include{Graphics/4-cycleInOct}
                \caption{}
                \label{4Oct}
            \end{minipage}%
                 \begin{minipage}{0.45\textwidth}
        \center
                \include{Graphics/4-cycleInOct2}
                \caption{}
                \label{4Oct2}
            \end{minipage}
            \end{figure}

$(v,tv)$ or $(w,rw) \rightarrow $ Prop~\ref{prop:base}:

Follows directly from either Lemma~\ref{lem:tv->cliques} or Lemma~\ref{lem:wr->cliques}.

 % \end{proof}
 \end{proof}

 \section{Induction Step}
 \label{sec:ind}

In this section we prove Proposition~\ref{prop:ind}.

  \begin{proof}[Proof of Proposition~\ref{prop:ind}]

   We proceed by contradiction and assume that $n$ is the minimal distance between two cliques with the desired properties. Let $v\in A^0$ and $w \in B^0$ be chosen such that $d(v,w) = n$.

   \begin{claim}
      \label{cl-ex-P}
       There exists a minimal path $P$ from $v$ to $w$ such that for all adjacent 0-cells $a$ and $b$ in $P$ we have that $a,sa,b,sb$ forms a clique.
   \end{claim}

   \begin{proof}
        We inductively construct the path $P$ in the following way. We let $v_0 = v$. Now we construct $v_i$ for $1\leq i < n$. Let $Q$ be the edge spanned by $w$ and $sw$, which is a convex subcomplex of $X$. By the induction hypothesis we know that $v_{k-1}$ and $sv_{k-1}$ span a $1$--cell $\rho$ in $S_{n-(k-1)}(Q)$. By Lemma~\ref{lem:proj} $B_{n-k}(Q) \cap Res(\rho,X)$ is a nonempty simplex. Let $v_k$ be a 0-cell in this simplex. Since $sQ= Q$ and $s\rho = \rho$, we see that $sv_k \in B_{n-k}(Q) \cap Res(\rho,X)$, therefore $v_k$ and $sv_k$ are adjacent. Therefore $v_{k-1}, sv_{k-1}, sv_k, v_k$ is a 4-cycle, therefore $X$ contains $(v_{k-1}, sv_k)$ making $v_{k-1}, sv_{k-1}, sv_k, v_k$ a clique, thus finishing the proof.     
   \end{proof}

   \begin{claim}
           For any $P$ satisfying the conditions of Claim~\ref{cl-ex-P}, then $P$ is disjoint from $tP$ and $rP$.
       \end{claim}

       \begin{proof}
       Suppose not. In particular suppose that $rP$ intersects with $P$ (the case where $tP$ intersects $P$ is handled similarly). We pick a vertex $x \in P \cap rP$. Since $A = rA$ and the action of $W$ is distance preserving we see that $d(A,x) = d(A, rx)$. It is easy to see that $rx \in P$ as well therefore since $P$ is a minimal path it contains one vertex at distance $d(A,x) = d(A,rx)$ from $A$ therefore $x = rx$. 

        Therefore by Lemma~\ref{lem:D8_diag->clique} $\langle r, s \rangle x$ is a clique. This contradicts the minimality of $n$.   
       \end{proof}

       Let $P$ be a minimal path satisfying the conditions of Claim~\ref{cl-ex-P}.  We can concatenate the paths $P,tP, rP,$ and$ rP$ to form a cycle $\gamma$. Let $f:D \rightarrow X$ be a minimal filling diagram of $\gamma$, which we know exists by Theorem~\ref{thm:exist}. We say the \emph{total boundary curvature} of $f$ is the sum of the curvature of all its boundary 0-cells given explicitly by the formula

       \[
           \sum_{v \in \partial D} \kappa_\partial(v).
       \] We now need a claim about the structure of $\gamma$.

       We pick $A,B,v,w,P,$ then $f$ to minimize the number of $2-cells$ in $D$, and secondly to minimize the total boundary curvature of $f$.

      We can partition the vertices in $\partial D$ into $8$ sets. For $n$ odd the partitioning is shown in Figure~\ref{IndOdd} and for $n$ even the partition is shown Figure~\ref{IndEven}. We now make two claims concerning the sums of boundary curvatures of these sets, which we prove later in the section.

      \begin{claim}
      \label{cl:E}
          For all $1 \leq i \leq 4$, either the following inequality holds,
          \[
              \sum_{v \in E_i} \kappa_\partial(v) \leq 1
          \] or we can pick $A',B',v',w',P'$, then $f'$ that satisfy our minimality assumption such that the inequality holds and such that all vertices in $C$ have the same boundary curvature in $f$ as $f'$ except for $a$ and $d$ (labels shown in Figure~\ref{tempPict}) with the boundary curvature of $d$ being decreased by $1$ and the boundary curvature of $a$ being increased by $1$.
      \end{claim}

      \begin{figure}
          \centering
                \include{Graphics/tempPict}
                \caption{}
                \label{tempPict}    
      \end{figure}

      \begin{claim}
      \label{cl:F}
          For all $1 \leq i \leq 4$, the following inequality holds,
          \[
              \sum_{v \in F_i} \kappa_\partial(v) \leq 0.
          \]
      \end{claim}

      For $n \geq 3$ we can apply Claim~\ref{cl:E} for all $E_i$ and then apply Claim~\ref{cl:F} for all $F_i$ and we see that the total boundary curvature of the resultant filling diagram satisfies
      \[
           \sum_{v \in \partial D} \kappa_\partial(v) =  \sum_{i=1}^4 \sum_{v \in E_i} \kappa_\partial(v) + \sum_{i=1}^4 \sum_{v \in F_i} \kappa_\partial(v) \leq 4 \cdot 1 + 4 \cdot 0 < 6
       \]
       which contradicts Lemma~\ref{lem:gauss}.

       For $n=3$ the $F_i$ are empty, so when we apply  Claim~\ref{cl:E} to $E_i$ in the resulting $f$  the boundary curvature of a vertex in $E_{i+1}$ (with the index taken mod 4) may be increased by 1. To deal with this obstruction we iteratively apply Claim~\ref{cl:E} to $E_1$ then $E_2$ then $E_3$ giving us a filling diagram $f$. We also observe that since in Claim~\ref{cl:E} we only decrease the curvature by $1$ of a single vertex in the new filling diagram, we observe that the sum of boundary curvatures of vertices in $E_4$ is at most $2$. Therefore the choice of $f$ satisfies
       \[
           \sum_{v \in \partial D} \kappa_\partial(v) =  \sum_{i=1}^3 \sum_{v \in E_i} \kappa_\partial(v) + \sum_{v \in E_4}\kappa_\partial(v) \leq  3 \cdot 1 + 2=  5 < 6,
       \] which contradicts Lemma~\ref{lem:gauss}.

\begin{figure} [!tbp]
    \centering
    \begin{minipage}{0.5\textwidth}
    \centering
                \include{Graphics/IndEven}
                \caption{}
                \label{IndEven}
    \end{minipage}~
    \begin{minipage}{0.5\textwidth}
    \centering
                \include{Graphics/IndOdd}
                \caption{}
                \label{IndOdd}
    \end{minipage}
    \end{figure} 

    The next few claims are concerned with proving Claim~\ref{cl:F}.

      \begin{claim}
      \label{Cl-curv1}
          If $a,b,c \in P$ are 0-cells distinct from $v$ and $w$ with a common neighbor $d \in D$. Then $X$ cannot contain $(d,sa)$ as shown in Figure~\ref{LemCurv1-1}. This claim is true for any minimal filling diagram $f$, not necessarily satisfying our assumption to minimize the total boundary curvature.
      \end{claim}

    \begin{figure}[!tbp]
    % \centering
    % \begin{minipage}{0.5\textwidth}
    \centering
                \include{Graphics/LemCurv1-1}
                \caption{}
                \label{LemCurv1-1}
    % \end{minipage}
    \end{figure}

    \begin{proof}
        If $X$ contains the edge $(c,sa)$ or $(sc,sa)$ then our minimality assumption is contradicted. If $X$ contains $(d,sd)$ then we can replace $b$ in the path $P$ by $d$ yielding a new path $P'$ and a new filling diagram $f'$ with fewer 2 cells, contradicting our minimality assumption. Therefore there is no valid triangulation of the 5-cycle $d,sa,sd,sc,c$, which gives a contradiction.
    \end{proof}

      \begin{claim}
          There are no two consecutive vertices in the interior of $P, tP, rP,$ or $trP$ with boundary curvatures $1$ and $0$. In particular this shows that any consecutive set of vertices in the interior of $P, tP, rP$ or $trP$ of an even cardinality has at most curvature 0, thus proving Claim~\ref{cl:F}.
      \end{claim}

       \begin{figure} [!tbp]
    \centering
    \begin{minipage}{0.5\textwidth}
    \centering
                \include{Graphics/side-_1,0_}
                \caption{}
                \label{side-(1,0)}
    \end{minipage}
    \end{figure}

      \begin{proof}
        Assume there are two consecutive vertices with curvature $1$ and $0$ as in Figure~\ref{side-(1,0)}.  By Claim~\ref{Cl-curv1} we know that $X$ cannot contain $(a,sw_2)$. If $X$ contains $(w_3,w_1)$ then $P$ is not a minimal path between $v$ and $w$, contradicting our assumption. Since $X$ contains the 5-cycle $a,w_1,sw_2,w_3,b$ and does not contain $(a, sw_2)$ or $(w_3,w_1)$, $X$ must contain $(b,sw_2)$. $X$ contains the 4-cycle $a,w_2,sw_1,w_0$ so must contain $(a,sw_1)$. Therefore $X$ contains the 4-cycles $b,sw_2,w_1,a$ and $b,sw_2,sw_1,a$, which imply that $X$ contains $(b,w_1)$ and $(b,sw_1)$ respectively. Therefore $X$ contains the 5-cycle $b,sw_1,sb, sw_3,w_3$, which implies the that $X$ contains $(b,sb)$. Then we can replace $w_2$ in $P$ with $P$ yielding a new path $P'$ and a filling diagram $f'$ with fewer 2-cells, contradicting our minimality assumption.       
      \end{proof}

      The next few claims are used to show Claim~\ref{cl:E}. In the claims we look at the possible configurations of the boundary curvatures of the vertices of an $E_i$ that sum to at least $2$. In each case we show that the resulting filling diagram satisfies the conditions of Claim~\ref{cl:E}. We now list the possible assignments of boundary curvature of $b,c,$ and $d$ (with labels taken from Figure~\ref{tempPict}) where the sum of their curvatures is at least $2$. We give each possible assignment as a string of digits with the digits refering to the boundary curvature of $b,c, $ and $d$ respectively. The configurations are: $111, 110, 101, 011, 211, 210, 201, 200, 21-1,2-11, 120,020,$ and $12-1$.

       \begin{remark}
        There is no filling diagram that satisifies our minimality assumption such the curvatures of $b,c,$ and $d$ are given by any string of the form $*21$, where $*$ represents any digit. 
    \end{remark}

      \begin{claim}
          For this following claim we do not assume that $f$ was chosen to minimize total boundary curvature. The vertices $rv,v,$and $ b$, do not have curvature $1,1$ and $1$ as seen in Figure~\ref{Ind-(1,1)1-1}. 
          \label{1,1,1}
      \end{claim}

       \begin{figure} [!tbp]
    \centering
    \begin{minipage}{0.5\textwidth}
    \centering
                \include{Graphics/Ind__1,1_1-1}
                \caption{}
                \label{Ind-(1,1)1-1}
    \end{minipage}
    \end{figure}

      \begin{proof}
          First assume that $X$ does not contain $(rb,b)$. We see that $X$ contains the 5-cycle $b,a,rb,rsv,sv$. $X$ cannot contain $(a,sv)$ by Claim~\ref{Cl-curv1}. So $X$ must contain $(rsv,b)$. Then $X$ contains the 4-cycle $rb,sv,b,a$, but $X$ does not contain a triangulation of this 4-cycle so we have a contradiction.

          Now we assume that $X$ contains $(rb,b)$. Then $X$ contains the 4-cycle $rb,b,sv,rsv$, so $X$ contains $(rb,sv)$. Also $X$ contains the 4-cycle $(a,v,sb,c)$ and since $X$ cannot contain $(v,c)$ by the minimality of $P$ we see that $X$ contains $(a,sb)$. Therefore $X$ contains the 4-cycle $a,sb,sv,rb$, so $X$ must contain $(rb,sb)$. The edge $(rb,sb)$ is a diagonal of the 8-cycle $\langle r, s \rangle b$, so by Lemma $\ref{lem:D8_diag->clique}$, we see that $\langle r, s \rangle b$ is a clique. Then the fact that $b$ is closer to $w$ than $v$ contradicts the minimality of $P$.
      \end{proof}

      \begin{remark}
        The previous claim still holds if instead of focusing on the boundary curvatures of $v,rv,$ we instead worked with the 0-cells $w$ and $tw$. The proof is identical aside from the fact that instead of applying  Lemma~\ref{lem:D8_diag->clique} to show the 8-cycle is a clique, we would instead apply Lemma~\ref{lem:10cycle-diag->clique} to show a 10-cycle is a clique. Therefore we can apply this claim to any $E_i$. This remark is true for the subsequent claims as well.
    \end{remark}
    
    \begin{claim}
        The 0-cells $rv,v,b$ cannot have boundary curvatures $1,1,$ and $0$ as in Figure~\ref{Ind_0(1,1)0-1}.
        \label{(1,1)0}
    \end{claim}

     \begin{figure} [!tbp]
    \centering
    \begin{minipage}{0.5\textwidth}
    \centering
                \include{Graphics/Ind_0_1,1_0-1}
                \caption{}
                \label{Ind_0(1,1)0-1}
    \end{minipage}~
    \begin{minipage}{0.5\textwidth}
    \centering
                \include{Graphics/Ind_0_1,1_0-3}
                \caption{}
                \label{Ind_0(1,1)0-3}
    \end{minipage}
    \end{figure}

    \begin{proof}

    First we observe that $X$ cannot contain the edge $(rv,b)$ because if we replace $(a,v)$ with $(rv,b)$ we obtain a filling diagram with a smaller total boundary curvature contradicting our minimality assumption.

    We first deal with the case where $X$ contains the edge $(a,sb)$. Then $X$ contains the 5-cycle $a,sb,sa,srv,rv$ shown in Figure~\ref{Ind_0(1,1)0-3}. Since $X$ does not contain $(rv,b) = (sb,srv)$ either $X$ contains $(a,sa)$ or $X$ contains $(sa,rv)$. However if $X$ contains $(sa,rv)$, then it contains the 4-cycle $sa,b,a,rv$ shown in Figure~\ref{Ind_0(1,1)0-4}, which implies that $X$ contains $(a,sa)$. Therefore $X$ must contain $(a,sa)$. 

     \begin{figure} [!tbp]
    \centering
    \begin{minipage}{0.5\textwidth}
    \centering
                \include{Graphics/Ind_0_1,1_0-4}
                \caption{}
                \label{Ind_0(1,1)0-4}
    \end{minipage}~
    \begin{minipage}{0.5\textwidth}
    \centering
                \include{Graphics/Ind_0_1,1_0-2}
                \caption{}
                \label{Ind_0(1,1)0-2}
    \end{minipage}
    \end{figure}

    Next we observe that since $X$ contains the 4-cycle $a,rv,ra,b$, we get that $X$ contains $(a,ra)$. Therefore $\langle r,s \rangle a$ is a 8-cycle. The argument we used to show that $a$ is adjacent to $sa$ did not use the fact that $sa$ is adjacent to $sc$, therefore we can use this argument to show that $a$ is adjacent to $sra$ as well. Therefore by Lemma~\ref{lem:D8_diag->clique} $\langle r,s \rangle a$ is a clique  Now we can replace the clique $A$ by the clique $\langle r, s \rangle a$ and the new cycle $\gamma$ allows for a filling diagram with fewer 2-cells yielding a contradiction.

    \begin{figure} [!tbp]
    \centering
    \begin{minipage}{0.5\textwidth}
    \centering
                \include{Graphics/Ind_0_1,1_0-5}
                \caption{}
                \label{Ind_0(1,1)0-5}
    \end{minipage}~
    \begin{minipage}{0.5\textwidth}
    \centering
                \include{Graphics/Ind_0_1,1_0-7}
                \caption{}
                \label{Ind_0(1,1)0-7}
    \end{minipage}
    \end{figure}

    Now assume instead $X$ does not contain $(a,sb)$. $X$ contains the 4-cycle $rv,a,b,sv$ seen in Figure~\ref{Ind_0(1,1)0-2}. Therefore $X$ must contain $(a,sv)$. Next we observe that $X$ contains the 5-cycle $a,v,sb,d,c$ shown in Figure~\ref{Ind_0(1,1)0-5}. Since $X$ does not contain $(a,sb)$ and since $X$ does not contain $(v,d)$ by the minimality of $P$, we see that $X$ must contain $(v,c)$. $X$ also contains the 5-cycle $a,sv,sb,d,c$, which by a similar argument shows that $X$ contains $(sv,c)$. Therefore $X$ contains the 5-cycle $c,v,sc,sd,d$ shown in Figure~\ref{Ind_0(1,1)0-7}. By the minimality of $P$, $X$ cannot contain $(v,d)$ or $(v,sd)$. Therefore $X$ must contain $(c,sc)$, but then we can replace $b$ in $P$ with $c$ producing a new cycle $\gamma$ admitting a filling diagram with fewer interior 2-cells, contradicting our minimality assumption. 
    \end{proof}

    \begin{claim}
        The 0-cells $rv,v,$ and $b$ cannot have curvatures $0,2$ and $0$ as seen in Figure~\ref{Ind_(0,2)0-1}.
        \label{(0,2)0}
    \end{claim}

     \begin{figure} [!tbp]
    \centering
    \begin{minipage}{0.5\textwidth}
    \centering
                \include{Graphics/Ind__0,2_0-1}
                \caption{}
                \label{Ind_(0,2)0-1}
    \end{minipage}~
    \begin{minipage}{0.5\textwidth}
    \centering
                \include{Graphics/Ind__0,2_0-2}
                \caption{}
                \label{Ind_(0,2)0-2}
    \end{minipage}
    \end{figure}

    \begin{proof}
        $X$ contains the 4-cycle $rb,a,b,v$ shown in Figure~\ref{Ind_(0,2)0-2}. If $X$ contains the edge $(a,v)$, then the 0-cells $rv$,$v$ and $b$ have curvature 1, so we apply the result from Claim~\ref{1,1,1} to get a contradiction. Otherwise $X$ contains $(rb,b)$. However if we replace $(a,rv)$ in the filling diagram with $(rb,b)$ we decrease the curvature of the boundary vertices by 1, contradicting that the filling diagram was constructed to minimize the boundary curvature.
    \end{proof}

    \begin{claim}
        The 0-cells $rv, v$ and $b$ cannot have curvatures $2, 0,$ and $1$ as in Figure~\ref{Ind_1(0,2)-1}.

        \label{Cl:1(0,2)}
    \end{claim}

     \begin{figure} [!tbp]
    \centering
    \begin{minipage}{1\textwidth}
    \centering
                \include{Graphics/Ind_1_0,2_-1}
                \caption{}
                \label{Ind_1(0,2)-1}
    \end{minipage}
    \end{figure}

    \begin{proof}
        If $X$ contains $(a,rv)$ then $rv,v,$ and $b$ have curvature 1, which contradicts Claim~\ref{1,1,1}. If $X$ contains $(rb,b)$ then we can replace $(a,rv)$ in the filling diagram $f$ with $(rb,b)$ reducing the total boundary curvature giving a contradiction. Therefore there is no triangulation of the 4-cycle $a,b,rv,rb$, proving the claim.
    \end{proof}

    \begin{claim}
        If the 0-cells $rv,v,b$ cannot have curvatures $2,0$ and $0$ as seen in Figure~ \ref{Ind_(2,0)0-1} then there is a minimal filling diagram of $\gamma$ that satisfies our minimality assumption such that $rv,v,b$ have curvatures 0, 2, and -1, the curvature of $rb$ is increased by 1, and the other vertices in $\gamma$ have the same curvature as in $f$. 
        \label{Cl:(2,0)0}

    \end{claim}

     \begin{figure} [!tbp]
    \centering
                \include{Graphics/Ind__2,0_0-1}
                \caption{}
                \label{Ind_(2,0)0-1}
    \end{figure}   

    \begin{proof}
       We see that $X$ contains the 4-cycle $rb,rv,b,a$. $X$ cannot contain $(rb,b)$ because this would contradict our minimality assumption, therefore $X$ contains $(rv,a)$. Therefore we construct $f'$ an alternate filling diagram by removing $(rb,v)$ and $(a,v)$ from $f$ and replacing them by the 1-cells $(a,rv)$ and $(rv,b)$. $f'$ satisfies the conditions of claim so we are done.
    \end{proof}

    \begin{claim}
        The 0-cells $v$ and $c$ cannot have curvatures $1$ and $1$ as in Figure~\ref{Ind_(0,1)1-1}. As consequence the 0-cells $rv, v$ and $c$ cannot have curvatures $0,1$ and $1$ or $2,1,$ and $1$.
        \label{(0,1)1}
    \end{claim}

      \begin{figure} [!tbp]
    \centering
    \begin{minipage}{0.5\textwidth}
    \centering
                \include{Graphics/Ind__0,1_1-1}
                \caption{}
                \label{Ind_(0,1)1-1}
    \end{minipage}~
    \begin{minipage}{0.5\textwidth}
    \centering
                \include{Graphics/Ind__0,1_1-2}
                \caption{}
                \label{Ind_(0,1)1-2}
    \end{minipage}
    \end{figure}   

    \begin{proof}
        First we notice that $X$ cannot contain $(rv,c)$ since if it did we could replace the 1-cell $(v,a)$ with $(rv,c)$ in the filling diagram to produce a filling diagram with less curvature in the boundary producing a contradiction. We see that $X$ contains the 4-cycle $rv,a,c,sv$ shown in Figure~\ref{Ind_(0,1)1-2}, so $X$ must contain $(a,sv)$, but this gives a contradiction by Claim~\ref{Cl-curv1}.
    \end{proof}

    \begin{claim}
        The 0-cells $rv, v$ and $c$ cannot have curvatures $1,2,$ and $0$ shown in Figure~\ref{Ind_(1,2)0-1}.
    \end{claim}

    \begin{figure}
        \include{Graphics/Ind__1,2_0-1}
        \caption{}
        \label{Ind_(1,2)0-1}
    \end{figure}

    \begin{proof}
        We can replace the 1-cells with ends $c$ and $rv$ in $f$ with the $1$-cell with ends $v$ and $rc$. In the resulting filling diagram $rv,v$ and $c$ have curvatures $2,1,$ and $1$ contradicting Claim~\ref{(0,1)1}.  
    \end{proof}

    \begin{claim}
        The 0-cells $rv, v$ and $c$ cannot have curvatures $1,0$ and $1$ as in Figure~\ref{Ind_(1,0)1-1}.  
    \end{claim}

     \begin{figure} [!tbp]
    \centering
    \begin{minipage}{0.5\textwidth}
    \centering
                \include{Graphics/Ind__1,0_1-1}
                \caption{}
                \label{Ind_(1,0)1-1}
    \end{minipage}~
    \begin{minipage}{0.5\textwidth}
    \centering
                \include{Graphics/Ind__1,0_1-2}
                \caption{}
                \label{Ind_(1,0)1-2}
    \end{minipage}
    \end{figure}   

    \begin{proof}
       If $X$ contains the $(rv,a)$ then we can construct a filling diagram with an equal number of 2-cells and equal curvature in the boundary such that $rv,v$ an $c$ have curvatures $0,1$ and $1$ so we can apply Claim~\ref{(0,1)1} to get a contradiction. If $X$ contains $(a,sv)$ then applying Claim~\ref{Cl-curv1} gives a contradiction. If $X$ contains $(b,c)$ then we can produce a filling diagram with an equal number of 2-cells and boundary curvature such that $rv,v$ and $c$ have curvatures $1,1$ and $0$ so we can apply Claim~\ref{(1,1)0} to get a contradiction. Therefore the 5-cycle $a,b,rv,sv,c$ shown in Figure~\ref{Ind_(1,0)1-2} has no valid triangulation so we get a contradiction. 
        
    \end{proof}

    \begin{claim}
        The 0-cells $rv,v,$ and $b$ cannot have curvatures $1,2$ and $-1$ as seen in Figure~\ref{Ind_(1,2)-1}.
        \label{Cl:(1,2)-1}
    \end{claim}

    \begin{figure}
        \centering
        \include{Graphics/Ind__1,2_-1}
                \caption{}
                \label{Ind_(1,2)-1}
    \end{figure}

    \begin{proof}
         First we see that if $X$ contains $(rb,sb)$ then by Lemma~\ref{lem:D8_diag->clique}, we have that $\langle r, s \rangle b$ is a clique, which contradicts our minimality assumption. Also $X$ cannot contain $(v,c)$ by the minimality of $P$. Therefore since $X$ contains the 5-cycle $sb,c,a,rb,v$ we have that $X$ contains $(a,sb)$. 

         Since $X$ contains the 4-cycle $rb,b,sv,rsv$ we see that $X$ contains $(rb,sv)$. Therefore $X$ contains the 4-cycle $rb,sv,sb,a$, which implies that $X$ contains $(sv,a)$, however this produces a contradiction by Claim~\ref{Cl-curv1}.
    \end{proof}

    \begin{claim}

    The $rv,v,$ and $b$ cannot have curvatures $2,1$ and $0$ as seen in Figure~\ref{Ind_(2,1)0-1}.      
    \end{claim}

    \begin{proof}
        We can produce the filling diagram $f'$ by replacing the 1-cell $(rb,v)$ in $f$ with $(rv,b)$. $f'$ satisfies our minimality assumption and the conditions of Claim~\ref{Cl:(1,2)-1}, therefore we get a contradiction.
    \end{proof}

    \begin{figure} [!tbp]
    \centering
    \begin{minipage}{0.5\textwidth}
    \centering
                \include{Graphics/Ind__2,1_0-1}
                \caption{}
                \label{Ind_(2,1)0-1}
    \end{minipage}~
    \begin{minipage}{0.5\textwidth}
    \centering
                \include{Graphics/Ind__2,1_-1-1}
                \caption{}
                \label{Ind_(2,1)-1-1}
    \end{minipage}
    \end{figure}   

    \begin{claim}
        If $rv,v$ and $b$ have curvatures $2,1$ and $-1$ as in Figure~\ref{Ind_(2,1)-1-1}, then there is a filling diagram $f'$ that satisfies our minimality assumption and such that $rv,v$ and $b$ have curvatures $1,2$ and $-2$, the curvature of $rb$ is increased by $1$, and all the other vertices of $\gamma$ have the same boundary curvature as they do in $f$.
    \end{claim}

    \begin{proof}
        To obtain $f'$ we replace the 1-cell $(rb,v)$ with $(rv,b)$.
    \end{proof}

    \begin{claim}
        If the curvatures of $rv,v$ and $a$ are $2,-1$ and $1$ as in Figure~\ref{Ind_(2,-11)1} , then $\gamma$ admits a filling diagram $f'$ that satisfies our minimality assumption such that the sum of the curvatures of $rv, v$ and $a$ is $1$, the curvature of $e$ is increased by 1, and all other 0-cells in $\gamma$ have the same boundary curvature as they do in $f$.
    \end{claim}

    \begin{figure} [!tbp]
    \centering
    \begin{minipage}{0.5\textwidth}
    \centering
                \include{Graphics/Ind__2,-1_1}
                \caption{}
                \label{Ind_(2,-11)1}
    \end{minipage}~
    \begin{minipage}{0.5\textwidth}
    \centering
                \include{Graphics/Ind__2,-1_2}
                \caption{}
                \label{Ind_(2,-1)2}
    \end{minipage}
    \end{figure}   

    \begin{proof}
        If $X$ contains $(e,c)$ then we can replace the 1-cell $(b,v)$ with $(e,c)$ in $f$ to get a filling diagram that satisfies our minimality assumption and the assumptions of Claim~\ref{Cl:1(0,2)}, giving a contradiction, so $X$ cannot contain $(e,c)$. If $X$ contains $(e,a)$ then we can produce a valid filling diagram with a lower total boundary curvature than $f$ contradicting our minimality assumption, so $X$ does not contain $(e,a)$. If $X$ contains $(a,b)$ then we can replace the 1-cell $(v,c)$ in $f$ with $(a,b)$ to get a filling diagram that satisfies the conditions of Claim~\ref{Cl:(2,0)0}, which would prove this claim, so instead assume $X$ does not contain $(a,b)$. Then since $X$ contains the 5-cycle $a,c,b,e,rv$, we see that $X$ contains $(rv,b)$ and $(rv,c)$. Then we can produce the filling diagram $f'$ which agrees with $f$ everywhere except for the changes shown in Figure~\ref{Ind_(2,-1)2} and $f'$ satisfies the claim so we are done.   
    \end{proof}

    Now we can prove Claim~\ref{cl:E}.

    \begin{proof} [Proof of Claim~\ref{cl:E}]
        All the combinations of boundary curvatures of the three vertices of each $E_i$ that sum up to more than $1$ have been show to adhere to the restrictions in Claim ~\ref{cl:E} by the previous claims. 
    \end{proof}
  \end{proof}

\section{Edge Types Between Cliques}

\label{sec:proofMain}
In this section we give a proof of Theorem~\ref{thm:main}. We introduce Theorems~\ref{thm:D8}~\ref{thm:D10}, which will guarantee that $W$ cannot act geometrically on any systolic complex $X$ containing specific subcomplexes. The formalism of Definitions~\ref{def:impl} and~\ref{def:idt} allows us to construct a way of applying Theorems~\ref{thm:D8}~\ref{thm:D10} to a more general class of systolic complexes through a procedure described in Lemma~\ref{lem:idt}.  

\begin{definition}
\label{def:impl}
	Suppose $Y$ is a simplicial complex acted on by a group $G$. We say the the edge type $E$ is \textit{elementarily implied} from a set of edge types $\mathcal{E}$ by a cycle $\gamma$ or that $\mathcal{E} \xmapsto{\gamma} E$ if there is a 4-cycle or 5-cycle $\gamma$, such that the sides of $\gamma$ are contained in $\mathcal{E}$ and the diagonals of $\gamma$ are all in the edge type $E$. We say that the edge type $E$ is \textit{chain implied} from a set of edge types $\mathcal{E}$ by a sequence of cycles $(\gamma_i)$ or that $\mathcal{E} \xRightarrow{(\gamma_i)} E$ if there exists a sequence of sets of edge types $\mathcal{E} = \mathcal{E}_0, \dots , \mathcal{E}_n $ and a sequence of edge types $E_0, \dots , E_n = E$ such that for $i \geq 1$, $\mathcal{E}_i =\mathcal{E}_{i-1} \cup \{ E_{i-1} \}$ and  $E_i$ is elementarily implied from $\mathcal{E}_i$ by $\gamma_i$. 
\end{definition}

\begin{remark}
	\label{rem:chains}
	By Remark~\ref{rm:sys -> diag}, if $X$ is a systolic complex and $\mathcal{E}$ is any subset of the edge types contained in $X$ then any edge type $E$ that is chain implied by $\mathcal{E}$ is in fact contained in $X$.
\end{remark}

\begin{notation}
	We identify the following subgroups of $W$ with their names as groups:
	\begin{align*}
		 \langle r, s \rangle & = D_8\\
		 \langle s.t \rangle & = D_{10}\\
		 \langle t,r \rangle & = D_4\\
	\end{align*}

	Let $Y$ be the 1-skeleton of the Coxeter complex of $W$. The subgroups $D_8, D_{10},$ and $D_4$ each fix unique vertices of $Y$, which together span a clique in $Y$. We label the vertices fixed by $D_8, D_{10},$ and $D_4$ as Fix$D_8$, Fix$D_{10},$ and Fix$D_4$ (see Figure~\ref{D8D10}). 
\end{notation}

\begin{theorem}
\label{thm:D8}
	Let $Y$ be the 1-skeleton of the Coxeter complex of $W$. Let $\mathcal{E}$  be the set consisting of only one edge type (Fix$D_8$, $t$Fix$D_8$). For any $n >0$ there exists an edge type $E$ chain implied by $\mathcal{E} $ between vertices of at least distance $n$ apart in $Y$.
\end{theorem}

The proof of Theorem~\ref{thm:D8} is given in Section~\ref{sec:(5,4)}.

\begin{figure} [!tbp]
	\include{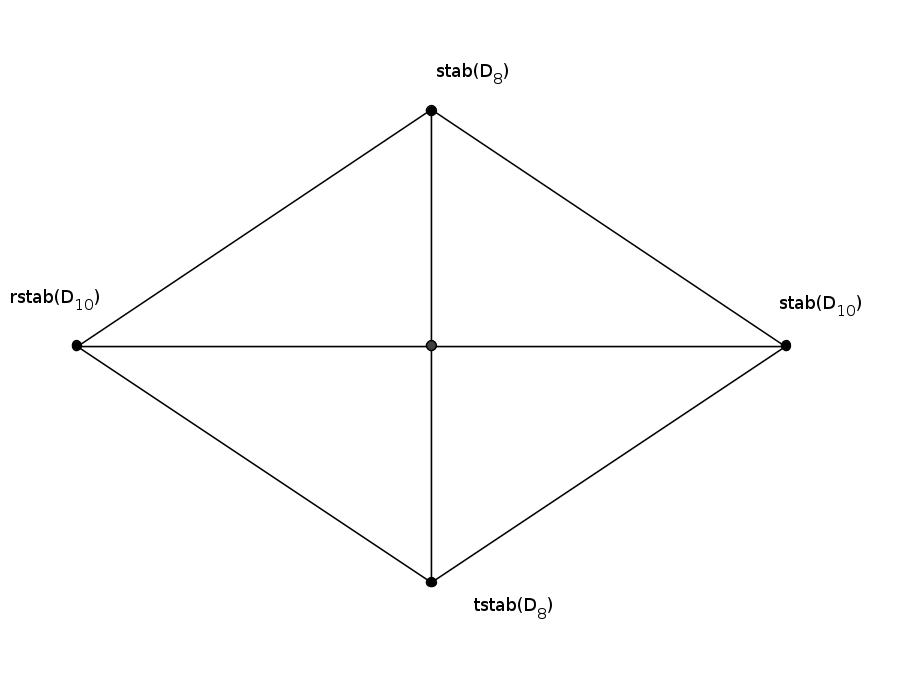}
	\caption{}
	\label{D8D10}
\end{figure}

\begin{theorem}
\label{thm:D10}
	Let $Y$ be the 1-skeleton of the Coxeter complex of $W$. Let $\mathcal{E}$ be the set consisting of only one edge type (Fix$D_{10}$, $r$Fix$D_{10}$). Then for any $n>0$, there exists an edge type $E$ chain implied by $\mathcal{E}$ between vertices of at least distance $n$ apart in $Y$.
\end{theorem}

The proof of Theorem~\ref{thm:D10} is omitted for thesis and will be prepared for the forthcoming publication. 

\begin{definition}
\label{def:cliqueEdge}
	Let $X$ be a simplicial complex and let $G$ be a group acting on $X$. Let $A$ and $B$ be cliques in $X^1$. Then we say $X$ \textit{contains the edge} $(A,B)$ or, equivalently, that $A$ and $B$ are $\textit{adjacent}$ if $A \cup B$ is a clique in $X^1$. Also, as with our notation for edges between 0-cells, we say $X$ contains the \textit{edge type} $(A,B)$ if $gA$ and $gB$ are adjacent for all $g \in G$.
\end{definition}

\begin{lemma}
\label{lem:adj}
	Suppose $X$ is a systolic complex acted on by $W$. Let $v \in X^0$ and suppose $D_8 v$, $D_{10} v$, and $D_4 v$ are cliques then they are pairwise adjacent in $X$.
\end{lemma}

The proof of the Lemma~\ref{lem:adj} will be given in Section~\ref{sec:connectingCliques}. Directly following from Theorem~\ref{Thm:fix} and Lemma~\ref{lem:adj} we get the following result. 

\begin{corollary}
	\label{cor:int}
	Suppose $X$ is a systolic complex acted on geometrically by $W$. Then there exists $v \in X^0$ such that  $D_8 v$, $D_{10} v$, and $D_4 v$ are pairwise adjacent cliques.	
\end{corollary}

\begin{proof}
	By Theorem~\ref{Thm:fix} there exists cliques $A$, $B$, and $C$ fixed respectively by $D_8$, $D_{10}$, and $D_4$ and $v \in A \cap B \cap C$. Then $D_8 v \subset A$, $D_{10} v \subset B$, and $D_4 v\subset C$ so all are cliques. Therefore by Lemma~\ref{lem:adj} $D_8 v$, $D_{10} v$, and $D_4 v$ are pairwise adjacent.	
\end{proof}

\begin{definition}
\label{def:idt}
	Suppose $X$ is a systolic complex, which is acted on geometrically by $W$. Let $Y$ be the $1$-skeleton of the Coxeter complex of $W$. By Corollary~\ref{cor:int} there exists $v \in X^0$ such that $D_8 v$, $D_{10} v$, and $D_4 v$ are pairwise adjacent. We identify vertices in $Y$ with some cliques in $X$ by identifying Fix$D_8$, Fix$D_{10}$, and Fix$D_4$ with $D_8 v$, $D_{10} v$, and $D_4 v$. We then extend the identification to all the vertices of $Y$ using the group action. This identification also gives an identification between edge types of $Y$ according to Definition~\ref{Def1} with some edge types between cliques of $X$ according to Definition~\ref{def:cliqueEdge}. We note that the identification depends on the choice of $v \in X^0$, which we call the \textit{root} of the identification.
\end{definition}

\begin{lemma}
\label{lem:idt}
	Suppose $X$ is a simplicial complex acted on geometrically by $W$. Let $Y$ be the $1$-skeleton of the Coxeter complex of $W$. Suppose $X$ contains the set of edge types $\mathcal{E}$ and $\mathcal{E}$ chain implies $E$ in $Y$. Then $X$ contains $E$.  
\end{lemma}

\begin{proof}
	We prove the Lemma for elementary implications. Since chain implications are composed of finitely many elementary implications we can just recursively apply our proof to expand our result to all chain implications.

	If $E$ is elementarily implied by $\mathcal{E}$ in $Y$, then there exists a cycle $\gamma$ in $Y$ with length $4$ or $5$ such that the sides of $\gamma$ are in $\mathcal{E}$ and the diagonals are in $E$. 

	First suppose $\gamma$ has length $4$. We label the vertices of $\gamma$ by $a,b,c,d$ and the corresponding cliques in $X$ by $A, B, C, D$. We suppose for contradiction that $X$ does not contain some edge $(v,w)$, where $v \in A$ and $w \in C$. By assumption $(a,c)$  and $(b,d)$ are in $E$ so there exists $g \in W$ such that $g(a,c) = (b,d)$. Therefore $g(v,w)$ has ends in $B$ and $D$. We see that $X$ contains the 4-cycle $v,gw,w,gv$, but by assumption both the diagonals of this $4$-cycle are not contained in $X$ so we get a contradiction.

	Now suppose $\gamma$ has length $5$. We label the vertices of $\gamma$ by $a,b,c,d,e$ and the corresponding cliques in $X$ by $A,B,C,D,E$. We suppose for contradiction that $X$ does not contain some edge $(v,w)$, where $v \in A$ and $w \in C$. Since all the diagonals of $\gamma$ are in a single edge type $E$ we have that there exists $g_1, g_2, g_3 g_4 \in W$ such that $g_1v \in B$, $g_1w \in D$, $g_2v \in C$, $g_2 w \in E$, $g_3v \in D$, $g_3 w \in A$, $g_4v \in E$, and $g_4 w \in B$. $X$ contains the $5$-cycle $g_3w,g_4w,g_2v,g_3v,g_2w$. Since $X$ does not contain diagonals $(g_3w,g_3v)$ and $(g_2w, g_2v)$ we see that $X$ contains $(g_4w, g_2w)$. Also, $X$ contains the $5$-cycle $g_3w, g_4w, g_2v, g_3v,g_4v$. Since $X$ does not contain diagonals $(g_4w,g_4v)$ and $(g_3v,g_3w)$ we see that $X$ contains $(g_2v,g_4v)$. Therefore $X$ contains the $4$-cycle $g_4v, g_2v, g_4w, g_2w$. However both diagonals of the $4$-cycle are in the edge type $(v,w)$ so we get a contradiction. 
\end{proof}

 Using the above results we can now prove Theorem \ref{thm:main}.

\begin{proof}[Proof of Theorem \ref{thm:main}]
	Assume for contradiction that $W$ is systolic, then there exists some systolic complex $X$ acted on geometrically by $W$. By Corollary~\ref{cor:int} there exist $v \in X^0$ such that  $D_8 v$, $D_{10} v$, and $D_4 v$ are pairwise adjacent cliques. 

	Now we claim that $X$ either contains $(D_8 v, t D_8 v)$ or $(D_{10} v, r D_{10} v)$. Suppose $X$ does not contain $(D_8 v, t D_8 v)$. Then there exist $a,b$ in $D_8 v$ and $t D_8 v$ respectively such that $X$ does not contain $(a,b)$. Now let $c$ and $d$ be any pair of verices from $D_{10} v$ and $r D_{10} v$, respectively. Then $X$ contains the 4-cycle $a,c,b,d$. Therefore, $X$ contains $(c,d)$ and since $c$ and $d$ were chosen arbitrarily we have that $X$ contains $(D_{10} v, r D_{10} v)$ finishing the proof of the claim.

	Let $\mathcal{E}$ be the set edge types of cardinality one containing either $($Fix$D_8, t$Fix$ D_8)$ or $($Fix$D_{10}, r $Fix$ D_{10})$ depending on whether $X$ contains $(D_8 v, t D_8 v)$ or $(D_{10} v, r D_{10} v)$. Then $\mathcal{E}$ chain implies edges of arbitrary distance in $Y$ by either Theorem~\ref{thm:D8} or Theorem~\ref{thm:D10}. Therefore, by Lemma~\ref{lem:idt}, $X$ contains infinitely many edge types so the action of $W$ cannot be cocompact, which gives a contradiction.
\end{proof}

% \begin{proof}
% 	First assume that $\gamma = v_{g_1}, \dots v_{g_n}$ is a 4-cycle. For contradiction assume that $\cup g_i K v$ is not a clique and that the nonadjacent vertices are $g_1kv$ and $g_3lv$ for $k,l \in K$. 
% \end{proof}

\section{Showing The Cliques are Pairwise Adjacent}
\label{sec:connectingCliques}

\begin{proof}[Proof of Lemma \ref{lem:adj}]

We can map the right Cayley graph of $W$ onto the subcomplex $Wv$ of $X$ in the following way:

\[
	g \rightarrow gv.
\] This map obviously preserves the left action of $W$. For the rest of the section we use labels for vertices from the right Cayley graph. If there exists some $n$-cycle in the right cayley graph we can obviously use our map to produce an $n$-cycle in $X$. We label all edge types by a single vertex and take the edge type's other end always to be $e$. For instance for $g \in W$, we label the edge type $(e,g)$ as $g$.   

We recall Remark~\ref{rem:chains} that if $X$ contains $\mathcal{E}$ and  $\mathcal{E}$ elementary implies $E$, then $E$ is contained in $X$.  We prove that $X$ contains a number of edges using this remark. We list a number of elementary implications below. The cycles used can be seen in Figure~\ref{rightCayley}. 

\begin{figure}
	\include{Graphics/rightCayley}
	\caption{}
	\label{rightCayley}
\end{figure}

\begin{align*}
\{tr, tst, rsr \} & \xmapsto{e, tr,tsr,tst} tsr\\
\end{align*}
This elementary implication indicates that if $X$ contains the edge types $(e,tr), (e,tst),$ and $(e,rsr)$ then $X$ contains the $4-$cycle $e,tr,tsr,tst$. In addtion every diagonal of the $4-$cycle is in the edge type $(e,tsr)$. Since $X$ contains $(e,tr), (e,tst),$ and $(e,rsr)$ by assumption using this implication and Remark~\ref{rem:chains} we see that $X$ contains $(e,tsr)$. The reader is advised to use this explanation to help them parse the following list of elementary implications.
\begin{align*}
\{ts,srs,r \} & \xmapsto{e,ts,trs,r} trs \\
\{tsr,srs,r \} & \xmapsto{e,tsr,trsr,r} trsr \\
\{rs,tstst,t\} & \xmapsto{e,rst,trst,t} trst \\
\{trst \} & \xmapsto{e,trst,trsrst,tsrt} trsrst \\
\{trsr \} & \xmapsto{e,trsr,rtstsr,rststr,rstr} rtstsr \\
\{trsrst, r\} & \xmapsto{e,trsrst,tsrst,r} tsrst \\
\{trsrst, srsr, t \} & \xmapsto{e,trsrst,trsrs,t} trsrs \\
\{tsrst, srs, t \} & \xmapsto{e,tsrst,tsrs,t} tsrs \\
\{rtstsr, rst, r\} & \xmapsto{e,rtstsr,rtsts, rst} rtsts \\
\{rtstsr, rsr, t\} & \xmapsto{e, rtstsr, rtststr, rsr} rtststr\\
\{rtststr, r, tstst \} & \xmapsto{e,rtststr, rtstst,r} rtstst\\
\{trs,tstst,rsr \} & \xmapsto{e,trs,rsts,rsr} rsts\\
\{trst,sts,rsr\} & \xmapsto{e,trst,rstst,rsr} rstst\\
\{srs, sts \} & \xmapsto{e,srs,strs,sts} strs \\
\{rsr, strs, tstst\} & \xmapsto{e,strs,srsts,rsr} srsts\\
\{sts, srst, tstst \} &\xmapsto{e,srst,strst,sts} strst\\
\{sts, strst, rsr \} &\xmapsto{e,strst,srstst,rsr} srstst\\
\{sts, srsts, tst\} & \xmapsto{e,sts,strsts,srsts} strsts\\
\{rsr, strsts, s\} & \xmapsto{e,strsts,srststs,rsr} srststs\\
\{srs, srsts, r \} & \xmapsto{e,srsts, rsrsts,r} rsrsts\\
\{rsrsts, tstst, s\} & \xmapsto{e,rsrstst,rstrs,s} rstrs\\
\{rstst, tsrst, srs \} & \xmapsto{e,rstst, rstrst, srs} rstrst \\
\{s, rstrst, sts \} & \xmapsto{e,rstrst,rsrstst,s} rsrstst\\
\end{align*}

We see that since $X$ contains $rsr$ that $D_{10}r$ is a $10$-cycle. Since the action of $D_{10}$ on the 10-cycle satisfies the conditions of Lemma~\ref{lem:10cycle-diag->clique} and since $X$ contains the edge $trsr$ a diagonal of the 10-cycle, we see that $r D_{10}$ is a clique. In particular $X$ contains $rstsr$. Now, we show a couple more edges are contained in $X$ using elementary implications.

\begin{align*}
\{rstsr, tst, rsrsts \} & \xmapsto{e,rsrsts, rstrsts, rstsr} rstrsts\\
\{rstrsts, s, t\} & \xmapsto{e,rstrsts, rstrstst,s} rstrstst\\	
\end{align*}

We have shown that $X$ contains the necessary edges for $D_8, D_{10},$ and $D_4$ to be pairwise adjacent, so we are done.
% \end{align*}

%  \begin{align*}
% 	 	\{ srs , st , trt , st \} & \mapsto srt \\
% 	 	\{ sr, rtr, sr, t \} &\mapsto tsr \\
% 	 	\{ s, trs, srsrs, srt \} &\mapsto stst \\
% 	 	\{t, srt, trtrt, trs \}&\mapsto trst \\
% 	 	\{strt, strt, strt, strt \}&\mapsto strsrt \\
% 	 	\{strs, strs, strs, strs, strs, strs \}&\mapsto strtrs \\
% 	 	\{s, strsrt, s strsrt \}& \mapsto trsrt \\
% 	 	\{t, trsrt, t, trsrt \}& \mapsto rsrt \\
% 	 	\{t , strsrt, t , strsrt \}& \mapsto srsrt \\
% 	 	\{t, strtrs, t , strtrs \}& \mapsto srtrs \\
% 	 	\{s, srtrs, s srtrs \}& \mapsto rtrs \\
% 	 	\{s, strtrs, s , strtrs \}& \mapsto trtrs \\
% 	 	\{strt, s , trts , trsrt \}& \mapsto strst \\
% 	 	\{strst, sr, rtrtr, rs \}& \mapsto strtr \\
% 	 	\{strst, srt, trtrtrt, trs \}& \mapsto strtrt \\
% 	 	\{rsr, rtr, rsr, rtr \}& \mapsto rstr \\
% 	 	\{rstr, trtrt, rtsr, srs \}& \mapsto rsrtr \\
% 	 	\{srsrs, srtr, rsr, rtrs \}& \mapsto rstrs\\
% 	 	\{trtrt, trsr, rtr, rsrt \}& \mapsto rtsrt\\
% 	 	\{rtsrt, rtr, trtsr, srs \}& \mapsto rsrtrt\\
% 	 	\{rtrsr, rtr, rsrtr, trt \}& \mapsto rtrstr\\
% 	 	\{rstrtr, t, rtrstr, srs \}&\mapsto rstrtrt\\
% 	 	\{r, srstr, rtrtr, rtsrs \}&\mapsto srsrtr\\
% 	 	\{srtrt, trsrt, trtrs, srsrs \}&\mapsto srstrt\\
% 	 	\{srstrt, rtr, trtsrs, r \}&\mapsto trtsrsr\\ 	
% 	 \end{align*
	 \end{proof}

\section{Finding cycles in the (5,4) Pentagonal Tiling}

\label{sec:(5,4)}

This section is concerned with proving Theorem~\ref{thm:D8}. To prove the theorem first we develop a notation to describe edge types in $Y$. Then we describe sequences of 4-cycles and 5-cycles that give us the elementary implications. Then we give an inductive argument to produce the necessary chain implication.
% The proof is divided into two parts. In the first part we show the existence of six sequences of polygons. There is one 
% sequence of pentagons, one sequence of rectangles, one sequence of squares, and three sequences of Traprilaterals. All the polygons in the sequences have diagonal that share a common edge type, so if $X$ contains any of the polygons it must also contain the polygon's diagonal's edge type. In the second part we show that polygons in the sequence can be constructed from the diagonals of the smaller polygons, thus showing that $X$ must contain all of the polygons in the six sequences. Finally we will show that since one of the sequences of polygons contains an edge of increasing length, so the action on $X$ must fail to be proper.

\begin{notation}
	\label{not}
	 We identify strings $(c_i)_{i=0}^n$ in the language $\{ L, S,  R \}$ with edge types in the following way:

	First we inductively construct the path $ P= (v_i)_{i=0}^{n+2}$ as follows

	$v_0= $Fix$D_8$

	$v_1 = t$Fix$D_8$

	For $i \geq 2$ let $v_i$ be the unique vertex in $W$Fix$D_8$ such that the 1-cell with ends $v_i$ and $v_{i+1}$ is at (clockwise) angle $\theta$ from the $1$-cell with ends $v_{i-1}$ and  $v_i$ in the link of $v$ where $\theta = \frac{\pi}{2}, \pi,$or $ \frac{3\pi}{2}$ if $c_{i-2} =L,S,$ or $R$ respectively (see Figure~\ref{pathPic}).

	We identify $(c_i)_{i=0}^n$ with the edge type $($Fix$D_8, v_{n+2})$.
\end{notation}

\begin{figure}
\include{Graphics/pathPic}
\caption{}
\label{pathPic}
	
\end{figure}

\begin{remark}
	 A string $(c_i)$ is not unique in describing an edge type. For instance we can exchange all the occurrences of $R$ and $L$ in $(c_i)$ to get a new string $({c'}_i)$ that refers to the same edge type as $(c_i)$. For example $lrlsr = rlrsl$. If we reverse $(c_i)$ then the resulting string refers to the same edge type. For example $rllss = ssllr$.
\end{remark}

\begin{remark}
	The paths described in Notation~\ref{not} all lie in the subcomplex of $Y$ with 0-cells $W$Fix$D_8$ and all the 1-cells with ends $w$Fix$D_8$ and $wt$Fix$D_8$ for $w \in W$. We can identify this subcomplex with the tiling by right-angled pentagons of the hyperbolic plane. For the rest of the section in the figures we only show this subcomplex.
\end{remark}

Now using our notation we describe 6 sequences of edge types.
\[
	(a_n)_{n=0}^\infty = \epsilon, SR, \dots , S^nRS^{n-1}, \dots
\]

\[
	(b_n)_{n=0}^\infty = R, SRL, \dots , S^nRLS^{n-1}, \dots
\]

\[
	(c_n)_{n=0}^\infty = R, SRS, \dots ,S^nRS^n, \dots
\]

\begin{figure}
\begin{minipage}{0.35\textwidth}
                 \include{Graphics/4,4-Pentagon/an}
	\caption{$a_n$}
	\label{an}
            \end{minipage}%
\begin{minipage}{0.35\textwidth}
               \include{Graphics/4,4-Pentagon/bn}
	\caption{$b_n$}
	\label{bn}
            \end{minipage}%
\begin{minipage}{0.35\textwidth}
              \include{Graphics/4,4-Pentagon/cn}
	\caption{$c_n$}
	\label{cn}
            \end{minipage}
\end{figure}

\[
	(d_n)_{n=0}^\infty = S, SSS, \dots , S^{2n +1}, \dots
\]

\[
	(e_n)_{n=0}^\infty = \epsilon, R, RLR , \dots , S^{n-2}RLRS^{n-2}, \dots 
\]

\[
	(f_n)_{n=0}^\infty = \epsilon,RL, SRLS, \dots , S^{n-1}RLS^{n-1} , \dots
\]

\begin{figure}
\begin{minipage}{0.35\textwidth}
                 \include{Graphics/4,4-Pentagon/dn}
	\caption{$d_n$}
	\label{dn}
            \end{minipage}%
\begin{minipage}{0.35\textwidth}
            \include{Graphics/4,4-Pentagon/en}
	\caption{$e_n$}
	\label{en}
            \end{minipage}%
\begin{minipage}{0.35\textwidth}
             \include{Graphics/4,4-Pentagon/fn}
	\caption{$f_n$}
	\label{fn}
            \end{minipage}
\end{figure}

% \begin{align*}
% 	(a_n)_{n=0}^\infty = [(-)], [SR], \dots , [S^nRS^{n-1}], \dots\\
% 	(b_n)_{n=0}^\infty = [R], [SRL], \dots , [S^nRLS^{n-1}], \dots\\
% 	(c_n)_{n=0}^\infty = [R], [SRS], \dots ,[S^nRS^n], \dots\\
% 	(d_n)_{n=0}^\infty = [S], [SSS], \dots , [S^{2n +1}], \dots\\
% 	(e_n)_{n=0}^\infty = [(-)], [R], [RLR] , \dots , [S^{k-2}RLRS^{k-2}], \dots \\
% 	(f_n)_{n=0}^\infty = [RL], [SRLS], \dots , [S^kRLS^k] , \dots \\
% \end{algin*}

% \begin{definition}
%  	Given a k-cycle $A = (v_i)_{i=0}^k$ contained in $X$ the sides of $A$ are edges of the form $(v_i,v_{i+1})$ where $0 \leq i \leq k$ and index addition is taken mod $k$. We say that the edge $(u,v)$ is a {\it diagonal} of  $A$ if $u$ and $v$ are elements of $A$ and $(u,v)$ is not a side of $A$. 
% \end{definition}

% \begin{definition}
% 	We say that a polygon $P$, is {\it contained} in $X$ or $X_4$ if the sides of $P$ are contained in $X$ or $X_4$ respectively.
% \end{definition}

% \begin{definition}
% 	A {\it polygon type} is an equivalence class of polygons under the action of $G$.
% \end{definition}

% \begin{definition}
% 	We say that a polygon type $\mathcal{P}$, is {\it contained} in $X$ or $X_4$ if the elements of $\mathcal{P}$ are contained in $X$ or $X_4$ respectively.
% \end{definition}
Now we describe six sequences of 4-cycles and 5-cycles that admit elementary implications.

% We let $Pent_n$ denote the following sequence of pentagons in $X_4$: let $u^0_1, \dots u^0_4$ be the vertices of a pentagon in $X_4^1$. Let $e_i$ for $1 \leq i \leq 4$ be the infinite geodesic ray starting at $u^0_{i-1}$ and going through $u^0_i$ with indices taken mod $4$. We let $u^n_i$ be the vertex on $e_i$ of distance $n+1$ from $u^0_{i-1}$.
% We define $Sq_n$ to be the pentagon with vertices $u^n_0, \dots , u^n_i$. The first few polygons in the sequence or shown in the figure.

$Pent_n$ is the sequence of 5-cycles whose first 3 iterations are shown in Figure~\ref{PentA}. The sequence gives the following sequence of elementary implications:

\[
	\{ a_n \} \xmapsto{Pent_n} b_n
\]

\begin{figure}[h!]
	\include{Graphics/4,4-Pentagon/PentA}
	\caption{$Pent_n$}
	\label{PentA}
\end{figure}

$Sq_n$ is the sequence of 4-cycles whose first 3 iterations are shown in Figure~\ref{Sq}. The  
sequence gives the following sequence of elementary implications: 

\[
	\{c_n \} \xmapsto{Sq_n} d_n
\]
\begin{figure}
	\include{Graphics/4,4-Pentagon/SquareA}
	\caption{$Sq_n$}
	\label{Sq}
\end{figure}

%$Pent_0$ is has sides of length given by the representative $[(-)]$ and is realized as one of the pentagons making up the tiling of $X_4$. $Pent_{n+1}$ is constructed by taking the sides of $Pent_{n}$ edges to either sides to get edges of type $[R^n S^n]$. The first few iterations of the sequencs are shown below 

% We let $Sq_n$ denote the following sequence of squares in $X_4:$ let $v$ be a vertex in $X_4$. Let $u^0_0, \dots , u^0_3$ be the four vertices of $X_4$ distance $1$ from $v$ in the one skeleton of $X_4$. We let $e_i$ for $0 \leq i \leq 3$ be the infinite geodesic in $X_4^1$ determined by the points $x_i$ and $v$. Let $u^n_i$ for $n\geq 1$ and $0\leq i \leq 3$ be the vertex on $e_i$ that is distance $n$ from $u^0_i$ and distance $n+1$ from $v$. $Sq_n$ is the square with vertices $u^n_0, \dots , u^n_3$.

$Rect_n$ is the sequence of 4-cycles whose first 3 iterations are shown in Figure~\ref{Rec}. The  
sequence gives the following sequence of elementary implications:

\[
 	\{ d_n, e_{n+1} \} \xmapsto{Rect_n} f_{n+1}
 \] 

 \begin{figure} 
	\include{Graphics/4,4-Pentagon/Rectangle}
	\caption{$Rec_n$}
	\label{Rec}
\end{figure}

$TrapA_n$ is the sequence of 4-cycles whose first 3 iterations are shown in Figure~\ref{TrapA}. The sequences gives the following sequence of elementary implications:

\begin{figure}
	\include{Graphics/4,4-Pentagon/QuadA}
	\caption{$TrapA_n$}
	\label{TrapA}
\end{figure}

\[
	\{ e_{n} , a_{n} , c_{n} \} \xmapsto{TrapA_n} e_{n+1}
\]

$TrapB_n$ is the sequence of 4-cycles whose first 3 iterations are given by Figure~\ref{TrapB}. The sequence gives the following sequence of elementary implications:

\begin{figure}
	\include{Graphics/4,4-Pentagon/QuadB}
	\caption{$TrapB_n$}
	\label{TrapB}
\end{figure}

\[
	\{ e_n, f_{n+1}, c_n \} \xmapsto{TrapB_n} a_{n+1}
\]

\begin{figure}
	\include{Graphics/4,4-Pentagon/QuadC}
	\caption{$TrapC_n$}
	\label{TrapC}
\end{figure}

$TrapC_n$ is the sequence of 4-cycles whose first 3 iterations are given in Figure~\ref{TrapC}. The cycles give the following sequence of elementary implications:

\[
	\{ c_n, b_{n+1}, e_{n+1} \} \xmapsto{TrapC_n} c_{n+1}
\]

\begin{lemma}
\label{lem:(5,4) len}
	For any $n \geq 1$ $\mathcal{E} = ($Fix$ D_8, t$Fix$D_8)$ chain implies $d_n$.
\end{lemma}

\begin{proof}
  To show the lemma for every $n$ we have to produce sequences $(\mathcal{E}_i)_{i=0}^m, (E_i)_{i=0}^m,$ and $(\gamma_i)_{i=0}^m$ such that $\mathcal{E}_0= \mathcal{E}$, $E_m = d_n$, for $i \geq 1$, $\mathcal{E}_i = \mathcal{E}_{i-1} \cup \{ E_{i-1} \}$ and for $i \geq 0$ $\mathcal{E}_i \xmapsto{\gamma_i} E_i$. We prove the stronger result that in addition to the previously stated requirements, we also require that 
  \[
   	\{ a_n, b_n , c_n , e_n, f_n \} \subset \mathcal{E}_m.
   \] We will proceed by induction on $n$. For the base case, $n=1$, we first note that the  edge type corresponding to the empty string, $\epsilon= ($Fix$ D_8, t$Fix$D_8) = a_1 = e_1= f_1$ is in $\mathcal{E}$ trivially. Let $\gamma_0 = Pent_1$, so $E_0 = b_1 = c_1$. Let $\gamma_1 = Sq_1$ , so $E_1 = d_1$. This gives us a chain implication of $d_1$, satisfying our extra requirement, completing the base case.

   Now we show the induction step. By the induction hypothesis we have the chain implication $\mathcal{E} \xRightarrow{(\gamma_i)_{i=0}^m} d_{n-1}$ given by sequences $(\mathcal{E}_i)_{i=0}^m, (E_i)_{i=0}^m,$ and $(\gamma_i)_{i=0}^m$ such that
   \[
   	\{ a_{n-1}, b_{n-1} , c_{n-1} , e_{n-1}, f_{n-1} \} \subset \mathcal{E}_m .
   \] We let $\gamma_{m+1} = TrapA_{n-1}$, therefore $E_{m+1} = e_n$. Let $\gamma_{m+2} = Rect_{n-1}$, therefore $E_{m+2} = f_n$. Let $\gamma_{m+3} = TrapB_{n-1}$, therefore $E_{m+3} = a_n$. Let $\gamma_{m+4} = Pent_n$, therefore $E_{m+4} = b_n$. Let $\gamma_{m+5} = TrapC_{n-1}$, therefore $E_{m+5} = c_n$. Let $\gamma_{m+6} = Sq_n$, therefore $E_{m+6} = d_n$. Therefore we have produced a chain implication $\mathcal{E} \xRightarrow{(\gamma)_{i=0}^{m+6}} d_n$ that satisfies our additional requirement, so we are done. 
\end{proof}

As a direct consequence of Lemma~\ref{lem:(5,4) len} we can prove Theorem~\ref{thm:D8}.

\begin{proof}[Proof of Theorem~\ref{thm:D8}]
	For any $n$ there exists $m$ such that $d_m$ is between vertices at least distance $n$ distance in $Y$. By Lemma~\ref{lem:(5,4) len} the set $\mathcal{E} = \{($Fix$D_8$, $t$Fix$D_8$)$\}$ chain implies $d_m$. This completes the proof.
\end{proof}

The proof of Theorem~\ref{thm:D10} uses the same  techniques, but is more complex and requires more cycles. We omit the proof, but it will be included upon publication.

\end{document}